\theoremstyle{plain}
\newtheorem{theorem}{Theorem}[section]
\newtheorem{lemma}[theorem]{Lemma}
\theoremstyle{definition}
\newtheorem{definition}[theorem]{Definition}
\newtheorem{example}[theorem]{Example}
\theoremstyle{remark}
\newtheorem{remark}[theorem]{Remark}
\begin{document}

\title{A direct proof of  well-definedness for the polymatroid Tutte polynomial}

\author{Xiaxia Guan$^{a,b}$~~~Xian'an Jin$^{a,}$\footnote{Corresponding author.} \\
\small $^a$School of Mathematical Sciences, Xiamen University, P. R. China\\
\small $^b$Department of Mathematics, Taiyuan University of Technology, P. R. China\\
\small \emph{Email addresses}: gxx0544@126.com; xajin@xmu.edu.cn}

\date{}
\begin{abstract}
For a polymatroid $P$ over $[n]$, Bernardi, K\'{a}lm\'{a}n and Postnikov [\emph{Adv. Math.} 402 (2022) 108355] introduced the polymatroid Tutte polynomial $\mathscr{T}_{P}$ relying on the order $1<2<\cdots<n$ of $[n]$, which generalizes the classical Tutte polynomial from matroids to polymatroids. They proved the independence of this order  by the fact that $\mathscr{T}_{P}$ is equivalent to another polynomial that only depends on $P$. In this paper, similar to the Tutte's original proof of the well-definedness of the Tutte polynomial defined by the summation over all spanning trees using activities depending on the order of edges, we give a direct and elementary proof of the well-definedness of the polymatroid Tutte polynomial.
\end{abstract}

\begin{keyword}
Tutte polynomial\sep Polymatroid\sep Well-definedness
\MSC 05C31\sep 05B35\sep 05C65
\end{keyword}
\maketitle

\section{Introduction}
\noindent

In 1954,  Tutte \cite{Tutte} introduced the Tutte polynomial of graphs as follows.
\begin{definition}
Let $G$ be a graph and let $\tau$ be a spanning tree of $G$. Given an ordering of edges of $G$, an edge $e$ in   $\tau$ is called  \emph{internally active} if $(\tau\setminus e)\cup e'$ is not a spanning tree of $G$ for all $e'<e$. Let $i_{G}(\tau)$ denote the number of internally edges with respect
to the spanning tree $\tau$. An edge $e$ in   $E(G)\setminus\tau$ is called  \emph{externally active} if $(\tau\cup e)\setminus e'$ is not a spanning tree of $G$ for all $e'<e$. Let $j_{G}(\tau)$ denote the number of externally edges with respect
to the spanning tree $\tau$. The \emph{Tutte polynomial} $T_{G}(x,y)$ of the graph $G$ is  the following sum
over all spanning trees $\tau$ of $G$: $$T_{G}(x, y)=\sum_{\tau \ \text{is a spanning tree of}\ G }x^{i_{G}(\tau)}y^{j_{G}(\tau)}.$$
\end{definition}

The definition of the Tutte polynomial depends on the order of edges, but Tutte \cite{Tutte} proved that it is an invariant of graphs. It generalizes many classical graph invariants, such as the chromatic polynomial, the flow polynomial etc. Furthermore, the Tutte polynomial and its evalutions play an important role in statistical physics, knot theory, and many other areas of mathematics and physics.

Crapo \cite{Crapo} extended the Tutte polynomial from graphs to matroids. As a generalization of the Tutte polynomial $T_{M}$ of matroids $M$, Bernardi, K\'{a}lm\'{a}n and Postnikov \cite{Bernardi}  defined the polymatroid Tutte polynomial $\mathscr{T}_{P}$ for polymatroids $P$.
We first recall the definition of polymatroids. Throughout the paper, let  $[n]=\{1,2,\ldots,n\}$, $2^{[n]}=\{I|I\subset[n]\}$, and let $\textbf{e}_{1},\textbf{e}_{2},\ldots,\textbf{e}_{n}$ denote the canonical basis of $\mathbb{R}^{n}$.
\begin{definition} \label{def polymatroid}
A \emph{polymatroid}\footnote{In many papers, cf.~\cite{Edmonds}, polymatroids are often defined as slightly larger sets, and the set $P$ of Definition \ref{def polymatroid} is referred to as the set of integer bases of a polymatroid. Moreover, the rank function $f$ also satisfies the monotonicity (in other words, $f(A)\leq f(B)$ if $A\subset B\subset [n]$) if $P\subset \mathbb{Z}_{\geq 0}^{n}$.} $P=P_{f}\subset \mathbb{Z}^{n}$ (i.e., on the ground set $[n]$) with the rank function $f$ is  given by
$$\left\{(a_{1},\ldots,a_{n})\in \mathbb{Z}^{n}\bigg|\sum_{i\in I}a_{i}\leq f(I) \ \text{for any}\ I\subset [n]  \ \text{and}\ \sum_{i\in [n]}a_{i}=f([n]) \right\},$$
where $f:2^{[n]}\rightarrow \mathbb{Z}$ satisfies
\begin{enumerate}
\item[(1)] $f(\emptyset)=0$;

\item[(2)] $f(I)+f(J)\geq f(I\cup J)+f(I\cap J)$ for any $I,J\subset [n]$ (submodularity).
\end{enumerate}
\end{definition}

A vector $\textbf{a}\in \mathbb{Z}^{n}$ is called a \emph{basis} of $P$ if $\textbf{a}\in P$. It is easy to see that the set of bases (viewed as element of $\{0,1\}^{n}$) of any matroid is a polymatroid.

We now recall the definition of  the polymatroid Tutte polynomial.
\begin{definition} \label{pop}
Let $P$ be a polymatroid  over $[n]$.
For a basis $\textbf{a}\in P$, an index $i\in [n]$ is \emph{internally active} if $\textbf{a}-\textbf{e}_{i}+\textbf{e}_{j}\notin P$  for any $j<i$. Let $\mathrm{Int}(\textbf{a})=\mathrm{Int}_{P}(\textbf{a})\subset [n]$ denote the set of all
internally active indices with respect to $\textbf{a}$.

An index $i\in [n]$ is \emph{externally active} if $\textbf{a}+\textbf{e}_{i}-\textbf{e}_{j}\notin P$ for any $j<i$. Let $\mathrm{Ext}(\textbf{a})=\mathrm{Ext}_{P}(\textbf{a})\subset [n]$ denote the set of all externally active indices with respect to $\textbf{a}$.

The \emph{polymatroid Tutte polynomial} $\mathscr{T}_{P}(x,y)$ is defined as
$$\mathscr{T}_{P}(x,y):=\sum_{\textbf{a}\in P}x^{oi(\textbf{a})}y^{oe(\textbf{a})}(x+y-1)^{ie(\textbf{a})},$$
where $$oi(\textbf{a}):=|\mathrm{Int}(\textbf{a})\setminus \mathrm{Ext}(\textbf{a})|,$$ $$oe(\textbf{a}):=|\mathrm{Ext}(\textbf{a})\setminus \mathrm{Int}(\textbf{a})|,$$ $$ie(\textbf{a}):=|\mathrm{Int}(\textbf{a})\cap \mathrm{Ext}(\textbf{a})|.$$
\end{definition}

Bernardi et al.~\cite{Bernardi} showed that if $M\subset 2^{[n]}$ is a matroid of rank $d$ over $[n]$, and  $P=P(M)\subset \{0,1\}^{n}$ is its corresponding polymatroid, then
$$T_{M}(x,y)=\frac{(x+y-xy)^{n}}{x^{n-d}y^{d}}\mathscr{T}_{P}(\frac{x}{x+y-xy},\frac{y}{x+y-xy}).$$

\begin{definition}
For any $\textbf{a},\textbf{b}\in \mathbb{R}^{n}$, define
$$d^{>}(\textbf{a},\textbf{b}):=\sum_{i:a_{i}>b_{i}}(a_{i}-b_{i})\ \text{and} \ d^{<}(\textbf{a},\textbf{b}):=\sum_{i:b_{i}>a_{i}}(b_{i}-a_{i}).$$
\end{definition}

\begin{definition}
For any polymatroid $P\subset \mathbb{Z}^{n}$ and any vector $\textbf{c}\in \mathbb{Z}^{n}$, define $$d^{>}(P,\textbf{c}):=\min_{\textbf{a}\in P}d^{>}(\textbf{a},\textbf{c})\ \text{and} \ \
d^{<}(P,\textbf{c}):=\min_{\textbf{a}\in P}d^{<}(\textbf{a},\textbf{c}).$$
Moreover, define
the formal power series $\widetilde{\mathscr{T}}_{P}(u,v)$ in two variables $u$ and $v$ by $$\widetilde{\mathscr{T}}_{P}(u,v):=\sum_{\textbf{c}\in \mathbb{Z}^{n}}wt_{P}(\textbf{c}),$$
where $$wt_{P}(\textbf{c}):=u^{d^{>}(P,\textbf{c})}v^{d^{<}(P,\textbf{c})}.$$
\end{definition}
Bernardi et al.\ \cite{Bernardi} established a relation between $\mathscr{T}_{P}$ and $\widetilde{\mathscr{T}}_{P}$.
\begin{theorem}\cite[Theorem 10.6]{Bernardi}\label{T-T}
For any polymatroid $P$, we have that $$\widetilde{\mathscr{T}}_{P}(u,v)=\mathscr{T}_{P}(\frac{1}{1-u},\frac{1}{1-v}).$$
\end{theorem}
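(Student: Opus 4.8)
The plan is to expand $\widetilde{\mathscr{T}}_{P}$ into a sum over the bases of $P$ by decomposing $\mathbb{Z}^{n}$ according to which basis ``controls'' each vector $\textbf{c}$, in the spirit of the activity expansion of the Tutte polynomial. First I would substitute $x=\tfrac{1}{1-u}$ and $y=\tfrac{1}{1-v}$ into the defining sum of $\mathscr{T}_{P}$ and use the elementary identities
\[
x+y-1=\frac{1-uv}{(1-u)(1-v)}=\frac{1}{1-u}+\frac{v}{1-v},
\]
together with $oi(\textbf{a})+ie(\textbf{a})=|\mathrm{Int}(\textbf{a})|$ and $oe(\textbf{a})+ie(\textbf{a})=|\mathrm{Ext}(\textbf{a})|$. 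This rewrites $\mathscr{T}_{P}\!\big(\tfrac{1}{1-u},\tfrac{1}{1-v}\big)$ as $\sum_{\textbf{a}\in P}\prod_{i=1}^{n}g_{i}(\textbf{a})$, where $g_{i}(\textbf{a})$ equals $\tfrac{1}{1-u}$, $\tfrac{1}{1-v}$, $\tfrac{1}{1-u}+\tfrac{v}{1-v}$, or $1$ according as $i$ is internally but not externally active, externally but not internally active, active in both ways, or active in neither way, with respect to $\textbf{a}$. Reading $g_{i}(\textbf{a})$ as a one-variable generating function in the coordinate $c_{i}$ — respectively $\sum_{c_{i}\le a_{i}}u^{a_{i}-c_{i}}$, $\sum_{c_{i}\ge a_{i}}v^{c_{i}-a_{i}}$, $\sum_{c_{i}\in\mathbb{Z}}u^{(a_{i}-c_{i})^{+}}v^{(c_{i}-a_{i})^{+}}$ (with $t^{+}:=\max\{t,0\}$), and ``$c_{i}=a_{i}$'' — the product becomes $\sum_{\textbf{c}\in R(\textbf{a})}u^{d^{>}(\textbf{a},\textbf{c})}v^{d^{<}(\textbf{a},\textbf{c})}$, where $R(\textbf{a})\subset\mathbb{Z}^{n}$ is the box of all $\textbf{c}$ with $c_{i}\le a_{i}$ when $i$ is internally but not externally active, $c_{i}\ge a_{i}$ when externally but not internally active, $c_{i}=a_{i}$ when $i$ is active in neither way, and $c_{i}$ free when $i$ is active in both ways. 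Thus the identity reduces to
\[
\sum_{\textbf{a}\in P}\ \sum_{\textbf{c}\in R(\textbf{a})}u^{d^{>}(\textbf{a},\textbf{c})}v^{d^{<}(\textbf{a},\textbf{c})}\ =\ \sum_{\textbf{c}\in\mathbb{Z}^{n}}u^{d^{>}(P,\textbf{c})}v^{d^{<}(P,\textbf{c})}.
\]

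For this it is enough to prove two facts: (A) the boxes $\{R(\textbf{a})\}_{\textbf{a}\in P}$ partition $\mathbb{Z}^{n}$; and (B) whenever $\textbf{c}\in R(\textbf{a})$, the basis $\textbf{a}$ attains both $d^{>}(P,\textbf{c})$ and $d^{<}(P,\textbf{c})$, so that the inner summand above is exactly $wt_{P}(\textbf{c})$. A preliminary remark makes the rearrangement legitimate: since every $\textbf{a}\in P$ satisfies $f([n])-\sum_{j\neq i}f(\{j\})\le a_{i}\le f(\{i\})$, the set $P$ lies in a fixed box, so only finitely many $\textbf{c}$ contribute to any given monomial $u^{p}v^{q}$ on either side, and each box sum is itself a power series.

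The engine for both (A) and (B) is the polymatroid exchange axiom: if $\textbf{a},\textbf{b}\in P$ and $a_{k}>b_{k}$, then $\textbf{a}-\textbf{e}_{k}+\textbf{e}_{l}\in P$ for some $l$ with $a_{l}<b_{l}$ (which follows from the submodularity of $f$ via the closure of tight sets under union and intersection; there is a mirror statement for $a_{k}<b_{k}$ and $\textbf{a}+\textbf{e}_{k}-\textbf{e}_{l}$). Two observations turn this into the claims: first, $\textbf{a}-\textbf{e}_{i}+\textbf{e}_{j}\in P$ with $j<i$ certifies $i\notin\mathrm{Int}(\textbf{a})$, and $\textbf{a}+\textbf{e}_{i}-\textbf{e}_{j}\in P$ with $j<i$ certifies $i\notin\mathrm{Ext}(\textbf{a})$; second, a single exchange $\textbf{a}\mapsto\textbf{a}-\textbf{e}_{i}+\textbf{e}_{j}$ changes $d^{>}(\cdot,\textbf{c})$ by $+1$, $0$ or $-1$, the value being $-1$ precisely when $a_{i}>c_{i}$ and $a_{j}<c_{j}$ (symmetrically for the other kind of exchange). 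For (B), note that $\sum_{i}b_{i}=f([n])$ for every $\textbf{b}\in P$ gives $d^{>}(\textbf{b},\textbf{c})-d^{<}(\textbf{b},\textbf{c})=f([n])-\sum_{i}c_{i}$, independent of $\textbf{b}$; hence $d^{>}(\cdot,\textbf{c})$ and $d^{<}(\cdot,\textbf{c})$ differ by a constant on $P$, and it suffices to show that $\textbf{c}\in R(\textbf{a})$ forces $\textbf{a}$ to minimize $d^{>}(\cdot,\textbf{c})=\sum_{i}(b_{i}-c_{i})^{+}$ over $P$. If it did not, then, since local optimality with respect to single exchanges coincides with global optimality for a separable convex function on a base polytope (a fact derivable from the exchange axiom), there would be an exchange $\textbf{a}-\textbf{e}_{i}+\textbf{e}_{j}\in P$ strictly decreasing $d^{>}(\cdot,\textbf{c})$, i.e.\ with $c_{i}<a_{i}$ and $c_{j}>a_{j}$. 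If $j<i$ this certifies $i\notin\mathrm{Int}(\textbf{a})$, contradicting that $\textbf{c}\in R(\textbf{a})$ and $c_{i}<a_{i}$ force $i\in\mathrm{Int}(\textbf{a})$; if $j>i$ it certifies $j\notin\mathrm{Ext}(\textbf{a})$, contradicting that $\textbf{c}\in R(\textbf{a})$ and $c_{j}>a_{j}$ force $j\in\mathrm{Ext}(\textbf{a})$. Either way (B) follows.

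The part I expect to be the main obstacle is (A). Disjointness should follow the same scheme: if $\textbf{c}\in R(\textbf{a})\cap R(\textbf{b})$ with $\textbf{a}\neq\textbf{b}$, then by (B) both bases minimize $d^{>}(\cdot,\textbf{c})$, so there is a sequence of $d^{>}$-preserving exchanges connecting $\textbf{a}$ to $\textbf{b}$, and the first such exchange, fed into the sign bookkeeping above, ought to contradict membership of $\textbf{c}$ in one of the two boxes — but the ``flat'' case of that bookkeeping is genuinely delicate, and a careless choice of exchange need not produce the contradiction, so this must be set up with care. Covering — that every $\textbf{c}$ lies in some $R(\textbf{a})$ — is the constructive counterpart: starting from any $d^{>}$-minimizer one keeps performing $d^{>}$-preserving exchanges of the form $\textbf{a}\pm(\textbf{e}_{i}-\textbf{e}_{j})$ with $j<i$ (intuitively, moving the coordinates of $\textbf{a}$ onto the ``correct side'' of $\textbf{c}$ prescribed by the order $1<2<\dots<n$) until none remains, and then checks that the terminal basis lies in the right box. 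The subtle point here is that membership in a box is not preserved under single exchanges, and no single linear potential controls the pivoting since the two admissible kinds of exchange move such a potential in opposite directions; establishing termination therefore requires a carefully designed monovariant — for instance a lexicographic combination of $d^{>}(\cdot,\textbf{c})$ with a coordinatewise measure of how far $\textbf{a}$ sits from the correct side of $\textbf{c}$ — and that is where the real difficulty lies.
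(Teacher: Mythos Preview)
The paper under review does not contain a proof of this theorem: it is quoted verbatim from \cite[Theorem~10.6]{Bernardi} and used only as background motivation for why $\mathscr{T}_{P}$ is $S_{n}$-invariant, whereas the paper's own contribution is the direct proof of Theorem~\ref{$S_{n}$-invariance} in Section~3. There is therefore no ``paper's own proof'' to compare your attempt against.

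That said, your outline is exactly the strategy carried out in the original Bernardi--K\'alm\'an--Postnikov paper: interpret each activity factor as a geometric series in one coordinate, assemble the resulting ``box'' $R(\textbf{a})$, and show that these boxes partition $\mathbb{Z}^{n}$ with $\textbf{a}$ realising the minima $d^{>}(P,\textbf{c})$ and $d^{<}(P,\textbf{c})$ on its own box. Your argument for (B) is essentially complete and correct; the observation that $d^{>}(\textbf{b},\textbf{c})-d^{<}(\textbf{b},\textbf{c})$ is constant on $P$, together with the exchange contradiction, is exactly what is needed.

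Where your write-up remains a proposal rather than a proof is (A), as you yourself flag. For disjointness, the route via ``$d^{>}$-preserving exchange chains'' is more fragile than necessary: in \cite{Bernardi} disjointness is handled by choosing the \emph{largest} index $i$ with $a_{i}\neq b_{i}$ and applying a single exchange at that index, which immediately produces a witness contradicting one of the box constraints; no chain is needed. For covering, your instinct that a naive potential fails is right, but rather than a lexicographic monovariant, the cleaner device is to define the candidate $\textbf{a}$ directly --- for each $\textbf{c}$ there is a canonical basis determined greedily coordinate by coordinate (equivalently, the unique $d^{>}$-minimiser that is lexicographically extremal), and one checks directly that it satisfies the box constraints. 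Until one of these two steps is actually written down, the argument for (A) is a plan, not a proof.
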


They also proved that the polymatroid Tutte polynomial is equivalent to the one introduced by Cameron and Fink \cite{Cameron}.
For a polymatroid $P$, although the definition of $\mathscr{T}_{P}$ relies on the natural order $1<2<\cdots<n$ of the ground set $[n]$, $\widetilde{\mathscr{T}}_{P}$ depends only on $P$. Hence, Theorem \ref{T-T} implies that $\mathscr{T}_{P}$ is an invariant of polymatroids $P$. Namely, for a permutation
$w\in S_{n}$ and a point $\textbf{a}= (a_{1},\ldots, a_{n})\in \mathbb{Z}^{n}$, define
$w(\textbf{a}): = (a_{w(1)},\ldots, a_{w(n)})$, and
$$w(P) := \{w(\textbf{a})| \textbf{a} \in P\},$$
where $S_{n}$ is the symmetric group acting on $\mathbb{Z}^{n}$ by permutations of coordinates. Then $\mathscr{T}_{P}$ is $S_{n}$-invariant, which can be stated as follows.
\begin{theorem}\cite[Theorem 4.7]{Bernardi}\label{$S_{n}$-invariance}
Let $P$ be a polymatroid on $[n]$. Then
$$\mathscr{T}_{P}(x,y)=\mathscr{T}_{w(P)}(x,y) \ \text{for any} \ w\in S_{n}.$$
\end{theorem}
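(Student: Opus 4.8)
\section{Proof proposal}

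The plan is to imitate Tutte's original argument: show that $\mathscr{T}_{P}$ is unchanged when the order $1<2<\cdots<n$ of $[n]$ is replaced by the order obtained by transposing two \emph{consecutive} elements, and then use that such transpositions generate $S_{n}$. Concretely, unwinding Definition~\ref{pop} one checks that $\mathscr{T}_{w(P)}(x,y)$, computed with the standard order, equals the polynomial one obtains from $P$ using the total order $\prec_{w}$ defined by $p\prec_{w}q\iff w^{-1}(p)<w^{-1}(q)$; since any two total orders on $[n]$ are joined by a chain of swaps of consecutive elements, it suffices to fix a polymatroid $P$ and prove $\mathscr{T}^{\pi}_{P}=\mathscr{T}^{\pi'}_{P}$, where $\pi$ is the standard order, $\pi'$ is obtained from $\pi$ by interchanging $k$ and $k+1$, and $\mathscr{T}^{\sigma}_{P}$ denotes the polynomial of Definition~\ref{pop} with the order $1<2<\cdots<n$ replaced by $\sigma$.

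The first step is to localize the discrepancy. For $i\notin\{k,k+1\}$ the sets $\{j:j<_{\pi}i\}$ and $\{j:j<_{\pi'}i\}$ coincide, so the internal/external status of such an $i$ with respect to any basis $\textbf{a}$ is the same under $\pi$ and under $\pi'$. Hence the summand of $\textbf{a}$ in $\mathscr{T}^{\sigma}_{P}$ factors as $O(\textbf{a})\cdot S^{\sigma}(\textbf{a})$, where $O(\textbf{a})$ collects the contribution of the indices $\neq k,k+1$ (independent of $\sigma\in\{\pi,\pi'\}$) and $S^{\sigma}(\textbf{a})$ is the contribution of $k$ and $k+1$. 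Comparing the two orders index by index, the only membership tests that can alter the status of $k$ or of $k+1$ are whether $\textbf{a}-\textbf{e}_{k}+\textbf{e}_{k+1}\in P$ and whether $\textbf{a}-\textbf{e}_{k+1}+\textbf{e}_{k}\in P$; these exhaust the relevant tests because $\textbf{a}+\textbf{e}_{k}-\textbf{e}_{k+1}=\textbf{a}-\textbf{e}_{k+1}+\textbf{e}_{k}$ and $\textbf{a}+\textbf{e}_{k+1}-\textbf{e}_{k}=\textbf{a}-\textbf{e}_{k}+\textbf{e}_{k+1}$.

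Next I would partition $P$ into \emph{fibers}: $\textbf{a}$ and $\textbf{a}'$ lie in the same fiber when they agree in every coordinate except possibly $k$ and $k+1$. The polymatroid exchange property — if $\textbf{a},\textbf{b}\in P$ and $a_{i}>b_{i}$, then $\textbf{a}-\textbf{e}_{i}+\textbf{e}_{j}\in P$ for some $j$ with $a_{j}<b_{j}$ — shows that each fiber is a discrete interval $\textbf{b}^{(0)},\textbf{b}^{(1)},\dots,\textbf{b}^{(\ell)}$ with $\textbf{b}^{(t)}-\textbf{b}^{(t-1)}=\textbf{e}_{k}-\textbf{e}_{k+1}$. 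A direct check against the truth-value combinations above then yields: if $\textbf{a}$ is a one-element fiber ($\ell=0$) then $S^{\pi}(\textbf{a})=S^{\pi'}(\textbf{a})$, and if $\textbf{a}$ is an interior point of its fiber then, modulo a structural claim discussed below, again $S^{\pi}(\textbf{a})=S^{\pi'}(\textbf{a})$. Thus every basis other than the two endpoints $\textbf{b}^{(0)},\textbf{b}^{(\ell)}$ of a fiber of length $\ell\geq1$ already contributes equally to $\mathscr{T}^{\pi}_{P}$ and to $\mathscr{T}^{\pi'}_{P}$, and it remains to prove, for each such fiber, the identity $O(\textbf{b}^{(0)})S^{\pi}(\textbf{b}^{(0)})+O(\textbf{b}^{(\ell)})S^{\pi}(\textbf{b}^{(\ell)})=O(\textbf{b}^{(0)})S^{\pi'}(\textbf{b}^{(0)})+O(\textbf{b}^{(\ell)})S^{\pi'}(\textbf{b}^{(\ell)})$; summing over all fibers then closes the induction.

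The crux — and the step I expect to be the hardest — is supplying the structural facts about $P$ underlying the last two reductions. They are statements of the shape: along a fiber, the data deciding (for $k$, and separately for $k+1$) membership in $P$ of $\textbf{a}\mp\textbf{e}_{k}\pm\textbf{e}_{j}$ for $j<k$ are suitably compatible — the data for $k$ and for $k+1$ coincide at an interior point (making interior bases harmless), and at the two endpoints they satisfy the symmetry that, together with $O(\textbf{b}^{(0)})=O(\textbf{b}^{(\ell)})$, balances the two endpoint terms. Each of these should come from repeated use of submodularity (equivalently, of the exchange property), but the naive two-term exchanges turn out to be circular, so the argument must genuinely feed in the information that $\textbf{a}$ lies in the interior, or at a prescribed end, of its fiber; isolating exactly the right exchange lemma is where the real work lies.
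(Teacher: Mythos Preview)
Your outline is exactly the paper's: reduce to an adjacent transposition $k\leftrightarrow k+1$, observe that indices $\neq k,k+1$ are unaffected, partition $P$ into fibers, match singletons and interior points termwise, and balance the two endpoints of each nontrivial fiber. So the architecture is right; what remains is precisely the ``crux'' you flag.

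Two remarks on that crux. First, the exchange lemmas you need are short once isolated: the paper's Lemma~\ref{transmit} (if $\textbf{a}+\textbf{e}_{k}-\textbf{e}_{i}\in P$ and $\textbf{a}+\textbf{e}_{i}-\textbf{e}_{j}\in P$ then $\textbf{a}+\textbf{e}_{k}-\textbf{e}_{j}\in P$) handles the interior points via a one-line rank-function argument, and a companion (Lemma~\ref{transmit2}) transfers an exchange at one endpoint to the other. For the comparison of activities of indices $<k$ at the two endpoints, the clean route is not raw exchanges but the tight-set characterization of activity (Lemma~\ref{ia}): $i$ is internally (resp.\ externally) active iff $i=\min(I)$ for some $I$ with $[n]\setminus I$ (resp.\ $I$) tight for $\textbf{a}$. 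This turns the question into comparing lattices of tight sets at $\textbf{b}^{(0)}$ and $\textbf{b}^{(\ell)}$, which is where submodularity enters cleanly.

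Second, your assertion that $O(\textbf{b}^{(0)})=O(\textbf{b}^{(\ell)})$ needs care: the paper does \emph{not} prove this unconditionally. It splits the endpoint analysis into a subcase where each endpoint already satisfies $S^{\pi}=S^{\pi'}$ individually (so no comparison of $O$-factors is needed), and subcases where an extra hypothesis (roughly: $k+1$ is $\pi'$-internally active at $\textbf{b}^{(0)}$, or $k$ is $\pi$-externally active there) is available and is exactly what drives $O(\textbf{b}^{(0)})=O(\textbf{b}^{(\ell)})$ via Lemmas~\ref{a and b2}--\ref{a and b3}. If you try to prove $O(\textbf{b}^{(0)})=O(\textbf{b}^{(\ell)})$ in full generality you may get stuck; the dichotomy is part of the argument.
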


In this paper, similar to the idea that Tutte \cite{Tutte} proved that his polynomial does not depend on the order of edges, our proof is direct and elementary for Theorem \ref{$S_{n}$-invariance}.
\section{Several lemmas}
\noindent

The main aim of this section is to provide several lemmas used in the next section.

\begin{lemma}\label{transmit}
Let $P$ be a polymatroid on $[n]$. For any basis $\textbf{a}\in P$ and three distinct indices $i,j,k\in [n]$, if $\textbf{a}+\textbf{e}_{k}-\textbf{e}_{i}\in P$ and $\textbf{a}+\textbf{e}_{i}-\textbf{e}_{j}\in P$, then $\textbf{a}+\textbf{e}_{k}-\textbf{e}_{j}\in P$.
\end{lemma}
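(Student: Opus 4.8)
The plan is to work directly from the defining inequalities of the polymatroid $P = P_f$. Write $\textbf{b} := \textbf{a} + \textbf{e}_k - \textbf{e}_i$ and $\textbf{c} := \textbf{a} + \textbf{e}_i - \textbf{e}_j$; we are told $\textbf{b}, \textbf{c} \in P$ and we want $\textbf{d} := \textbf{a} + \textbf{e}_k - \textbf{e}_j \in P$. Since $\textbf{a} \in P$ we have $\sum_{\ell} a_\ell = f([n])$, and each of $\textbf{b}, \textbf{c}, \textbf{d}$ has the same coordinate sum (we add one unit and subtract one unit), so the equality constraint $\sum_{\ell \in [n]} d_\ell = f([n])$ is automatic. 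Thus the whole task is to verify, for every $I \subset [n]$, the inequality $\sum_{\ell \in I} d_\ell \le f(I)$.

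First I would dispose of the easy cases by looking at how $I$ meets $\{j,k\}$. If $k \notin I$, then $\sum_{\ell \in I} d_\ell \le \sum_{\ell \in I} a_\ell \le f(I)$ since we only possibly removed mass at $j$. If $k \in I$ and $j \in I$, then $\sum_{\ell\in I} d_\ell = \sum_{\ell\in I} a_\ell \le f(I)$. So the only problematic sets are those $I$ with $k \in I$ and $j \notin I$, where $\sum_{\ell\in I} d_\ell = \sum_{\ell\in I} a_\ell + 1$, and we must show this is $\le f(I)$, i.e. that $\sum_{\ell \in I} a_\ell \le f(I) - 1$ — a strict improvement over the bound coming from $\textbf{a}$ alone. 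This is where the hypotheses $\textbf{b}, \textbf{c} \in P$ must be used, and I expect this to be the main obstacle.

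The key idea is to feed a cleverly chosen set into the $\textbf{b}$- and $\textbf{c}$-constraints and combine them with submodularity (2) of $f$. Fix such an $I$ (so $k \in I$, $j \notin I$). Consider two subcases according to whether $i \in I$. If $i \notin I$, then the $\textbf{c}$-constraint on $I \cup \{i\}$ reads $\sum_{\ell \in I} a_\ell + (a_i + 1) \le f(I \cup \{i\})$ (since $c_i = a_i+1$, $c_\ell = a_\ell$ for $\ell \in I$), while the $\textbf{a}$-constraint gives $a_i \ge f(\{i\}) - 0$... rather, I would instead use that $\textbf{a} \in P$ means no single constraint can be improved, together with submodularity $f(I \cup \{i\}) + f(\{i\}) \ge f(I) + f(\emptyset)$; the point is to trade the "$+1$" we gained from $c_i$ against the slack somewhere. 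More cleanly: from $\textbf{b} \in P$ applied to the set $I$ itself, $b_k = a_k + 1$ and $i \notin I$ so $\sum_{\ell\in I} b_\ell = \sum_{\ell\in I} a_\ell + 1 \le f(I)$, which is exactly what we want when $i \notin I$. If instead $i \in I$, then $b$ on $I$ has $b_k = a_k+1$, $b_i = a_i - 1$, so $\sum_{\ell\in I} b_\ell = \sum_{\ell\in I} a_\ell$, giving nothing; here I would instead apply the $\textbf{c}$-constraint to the set $I \setminus \{i\}$ (which omits $i$ and $j$, contains $k$): $\sum_{\ell \in I\setminus\{i\}} c_\ell = \sum_{\ell \in I} a_\ell - a_i \le f(I\setminus\{i\})$, and then use submodularity $f(I \setminus \{i\}) + f(\{i\} \cup (\text{something}))$ paired with an $\textbf{a}$- or $\textbf{b}$-constraint that pins down $a_i$ from below, to conclude $\sum_{\ell\in I} a_\ell \le f(I) - 1$.

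In short, the skeleton is: (i) reduce to $I$ with $k \in I$, $j \notin I$; (ii) when $i \notin I$, the desired inequality is literally the $\textbf{b}\in P$ constraint on $I$; (iii) when $i \in I$, combine the $\textbf{c}\in P$ constraint on $I\setminus\{i\}$ with the submodular inequality $f(I\setminus\{i\}) + f(I') \ge f(I\setminus\{i\}) \cup I') + f(\emptyset)$ for a suitable $I'$ (or directly with a tight $\textbf{a}$-constraint isolating coordinate $i$) to recover the missing unit. The main obstacle is arranging the right auxiliary set and the right tight constraint in case (iii) so that the submodular exchange closes the gap; everything else is bookkeeping with the defining inequalities. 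Once all $I$ are checked, $\textbf{d} \in P$ follows, proving the lemma. \cvd
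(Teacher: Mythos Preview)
Your skeleton is right and step (ii) is exactly what is needed, but step (iii) is a genuine gap as written: you never close it, and the route you sketch (apply the $\textbf{c}$-constraint to $I\setminus\{i\}$ and then invoke submodularity together with some lower bound on $a_i$) does not obviously work, because all the defining constraints are upper bounds on partial sums and there is no readily available tight lower bound on the single coordinate $a_i$ to trade against. More to the point, submodularity of $f$ is entirely unnecessary here.

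The fix is that in case (iii) you should apply the $\textbf{c}$-constraint to $I$ itself rather than to $I\setminus\{i\}$. When $i\in I$ and $j\notin I$ one has
\[
\sum_{\ell\in I} c_\ell \;=\; \sum_{\ell\in I} a_\ell + [i\in I] - [j\in I] \;=\; \sum_{\ell\in I} a_\ell + 1 \;=\; \sum_{\ell\in I} d_\ell,
\]
so $\textbf{c}\in P$ gives $\sum_{\ell\in I} d_\ell \le f(I)$ immediately, with no submodularity and no auxiliary set. This is precisely the paper's argument: it does not even bother with your preliminary reduction to $k\in I$, $j\notin I$, but simply splits every $I$ on whether $i\in I$, observing that $\sum_{\ell\in I} d_\ell \le \sum_{\ell\in I} b_\ell$ whenever $i\notin I$ and $\sum_{\ell\in I} d_\ell \le \sum_{\ell\in I} c_\ell$ whenever $i\in I$ (both are one-line coordinate comparisons), which covers all $I$ at once.
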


\begin{proof}
Let $\textbf{b}=\textbf{a}+\textbf{e}_{k}-\textbf{e}_{i},$ $\textbf{c}=\textbf{a}+\textbf{e}_{i}-\textbf{e}_{j}$ and $\textbf{d}=\textbf{a}+\textbf{e}_{k}-\textbf{e}_{j}.$ Let $f$ be the rank function of the polymatroid $P$. It is easy to see that $$\sum_{i'\in [n]}d_{i'}=\sum_{i'\in [n]}a_{i'}=f([n]).$$
We now divide two cases to prove that $\sum\limits_{i'\in I}d_{i'}\leq f(I)$ for any subset $I\subset [n]$.
\begin{enumerate}
\item[(i)] If $i\notin I$, then  $\sum\limits_{i'\in I}d_{i'}\leq \sum\limits_{i'\in I}b_{i'}\leq f(I).$
\item[(ii)] If $i\in I$, then  $\sum\limits_{i'\in I}d_{i'}\leq \sum\limits_{i'\in I}c_{i'}\leq f(I).$
\end{enumerate}
Hence, the vector $\textbf{d}=\textbf{a}+\textbf{e}_{k}-\textbf{e}_{j}\in P$ by the definition of polymatroids.
\end{proof}

\begin{example}\label{EX}
Let $P\subset \mathbb{Z}^{5}$ be a polymatroid. The rank function $f:2^{[n]}\rightarrow \mathbb{Z}$ of the polymatroid $P$ is given by $f(\emptyset)=0$, $f(\{1\})=f(\{3\})=1$, $f(\{2\})=f(\{4\})=f(\{5\})=f(\{1,2\})=f(\{1,3\})=f(\{2,3\})=f(\{4,5\})=f(\{1,2,3\})=2$ and $f(I)=3$ for the other $I\subset \{1,2,3,4,5\}$. It is easy to see that $f$ satisfies the submodularity and the monotonicity. Then
\begin{multline*}
P=\{\textbf{a}^{1}=(1,0,0,2,0),\textbf{a}^{2}=(1,0,0,0,2),\textbf{a}^{3}=(1,1,0,1,0),\\
\textbf{a}^{4}=(1,1,0,0,1),\textbf{a}^{5}=(1,0,1,1,0),\textbf{a}^{6}=(1,0,1,0,1),\textbf{a}^{7}=(1,0,0,1,1),\\
\textbf{a}^{8}=(0,2,0,1,0),\textbf{a}^{9}=(0,2,0,0,1),\textbf{a}^{10}=(0,1,0,2,0),\textbf{a}^{11}=(0,0,1,2,0),\\
\textbf{a}^{12}=(0,1,0,0,2,),\textbf{a}^{13}=(0,0,1,0,2),\textbf{a}^{14}=(0,1,1,1,0),\\
\textbf{a}^{15}=(0,1,1,0,1),\textbf{a}^{16}=(0,1,0,1,1),\textbf{a}^{17}=(0,0,1,1,1)\}.
\end{multline*}
We have that $\textbf{a}^{10}+\textbf{e}_{2}-\textbf{e}_{4}=(0,2,0,1,0)=\textbf{a}^{8}$, $\textbf{a}^{10}-\textbf{e}_{2}+\textbf{e}_{1}=(1,0,0,2,0)=\textbf{a}^{1}$ and $\textbf{a}^{10}+\textbf{e}_{1}-\textbf{e}_{4}=(1,1,0,1,0)=\textbf{a}^{3}\in P$.
\end{example}

Let $P$ be a polymatroid on $[n]$. We now consider two distinct bases $\textbf{a},\textbf{b}\in P$ so that there are two indices $i,j\in [n]$ with $a_{t}=b_{t}$ for all $t\in [n]\setminus \{i,j\}$.
\begin{lemma}\label{transmit2}
Let $P$ be a polymatroid on $[n]$. Let $\textbf{a}$ and $\textbf{b}$ be distinct bases of $P$ for which there are two indices $i,j\in [n]$ so that $a_{i}<b_{i}$ and $a_{t}=b_{t}$ for all $t\in [n]\setminus \{i,j\}$ (which imply $a_{j}>b_{j}$ and $i\neq j$). Then for any $k\in[n]\setminus \{i,j\}$, the following conclusions hold.
\begin{itemize}
  \item [(1)] If $\textbf{a}+\textbf{e}_{k}-\textbf{e}_{j}\in P$, then $\textbf{b}+\textbf{e}_{k}-\textbf{e}_{i}\in P$.
  \item [(2)] If $\textbf{a}-\textbf{e}_{k}+\textbf{e}_{i}\in P$, then $\textbf{b}-\textbf{e}_{k}+\textbf{e}_{j}\in P$.
\end{itemize}
\end{lemma}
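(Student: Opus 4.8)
The plan is to verify directly, from Definition~\ref{def polymatroid}, that the candidate vector lies in $P$, by checking the inequality $\sum_{t\in I}(\cdot)\le f(I)$ for every $I\subset[n]$ through a short case analysis on how $I$ meets $\{i,j,k\}$. Throughout write $s_I(\textbf{v}):=\sum_{t\in I}v_t$ for $\textbf{v}\in\mathbb{Z}^n$ and $I\subset[n]$, let $f$ be the rank function of $P$, and write $[\,\varphi\,]\in\{0,1\}$ for the truth value of a statement $\varphi$. Since $\textbf{a},\textbf{b}$ are bases of $P$ agreeing off $\{i,j\}$, the equalities $\sum_t a_t=\sum_t b_t=f([n])$ force $a_i+a_j=b_i+b_j$; setting $c:=b_i-a_i$ we get $c\ge 1$, $\textbf{b}=\textbf{a}+c(\textbf{e}_i-\textbf{e}_j)$, and in particular $a_j-b_j=c\ge1$ (this ``strict gap'' is what really does the work). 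Note that in general $s_I(\textbf{b})=s_I(\textbf{a})+c\,([i\in I]-[j\in I])$.

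For part~(1) put $\textbf{b}':=\textbf{b}+\textbf{e}_k-\textbf{e}_i$ and $\textbf{a}':=\textbf{a}+\textbf{e}_k-\textbf{e}_j\in P$ (the hypothesis). Then $\textbf{b}'\in\mathbb{Z}^n$, $s_{[n]}(\textbf{b}')=f([n])$, and $s_I(\textbf{b}')=s_I(\textbf{b})+[k\in I]-[i\in I]$, so it remains to show $s_I(\textbf{b}')\le f(I)$ for all $I$. If $i\in I$, or if $k\notin I$, then $s_I(\textbf{b}')\le s_I(\textbf{b})\le f(I)$. The only remaining case is $i\notin I$ and $k\in I$, where $s_I(\textbf{b}')=s_I(\textbf{b})+1$, $s_I(\textbf{b})=s_I(\textbf{a})-c\,[j\in I]$ (as $i\notin I$), and $s_I(\textbf{a}')=s_I(\textbf{a})+1-[j\in I]$ (as $k\in I$). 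If $j\notin I$ then $s_I(\textbf{b}')=s_I(\textbf{a})+1=s_I(\textbf{a}')\le f(I)$; if $j\in I$ then $s_I(\textbf{b}')=s_I(\textbf{a})-c+1=s_I(\textbf{a}')-(c-1)\le s_I(\textbf{a}')\le f(I)$ using $c\ge1$. This settles~(1).

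Part~(2) is the mirror image: put $\textbf{b}'':=\textbf{b}-\textbf{e}_k+\textbf{e}_j$ and $\textbf{a}'':=\textbf{a}-\textbf{e}_k+\textbf{e}_i\in P$, so $\textbf{b}''\in\mathbb{Z}^n$, $s_{[n]}(\textbf{b}'')=f([n])$, and $s_I(\textbf{b}'')=s_I(\textbf{b})-[k\in I]+[j\in I]$. If $j\notin I$, or if $k\in I$, then $s_I(\textbf{b}'')\le s_I(\textbf{b})\le f(I)$, leaving only $j\in I$ and $k\notin I$, where $s_I(\textbf{b}'')=s_I(\textbf{b})+1$, $s_I(\textbf{b})=s_I(\textbf{a})+c\,[i\in I]-c$ (as $j\in I$), and $s_I(\textbf{a}'')=s_I(\textbf{a})+[i\in I]$ (as $k\notin I$). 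If $i\in I$ then $s_I(\textbf{b}'')=s_I(\textbf{a})+1=s_I(\textbf{a}'')\le f(I)$; if $i\notin I$ then $s_I(\textbf{b}'')=s_I(\textbf{a})-c+1=s_I(\textbf{a}'')-(c-1)\le s_I(\textbf{a}'')\le f(I)$ using $c\ge1$.

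The only genuine obstacle is the single ``bad'' case ($i\notin I,\ k\in I$ in (1); $j\in I,\ k\notin I$ in (2)), where the coordinate $k$ enters $I$ the wrong way and $s_I$ of the candidate vector strictly exceeds $s_I(\textbf{b})$; all the other cases are immediate. The resolution has two moving parts: one switches from the bound coming from $\textbf{b}$ to the sharper bound coming from the hypothesis vector $\textbf{a}'$ (resp.\ $\textbf{a}''$), and in the last sub-case the slack $c=b_i-a_i\ge1$ exactly absorbs the remaining discrepancy. (As a reality check, when $c=1$ part~(1) is trivial, since then $\textbf{b}+\textbf{e}_k-\textbf{e}_i$ and $\textbf{a}+\textbf{e}_k-\textbf{e}_j$ are literally the same vector; the case analysis above is precisely what upgrades this to arbitrary $c$.) Everything else is routine bookkeeping with Definition~\ref{def polymatroid}, and Lemma~\ref{transmit} is not needed for this argument.
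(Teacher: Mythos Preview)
Your proof is correct and follows essentially the same route as the paper's: verify the submodular inequalities of Definition~\ref{def polymatroid} for the target vector by comparing $s_I$ either to $s_I(\textbf{b})$ or to $s_I$ of the hypothesis vector $\textbf{a}+\textbf{e}_k-\textbf{e}_j$ (resp.\ $\textbf{a}-\textbf{e}_k+\textbf{e}_i$). The only difference is cosmetic: the paper splits into just two cases (for~(1), on whether $i\in I$; for~(2), on whether $j\in I$), observing that when $i\notin I$ one has $s_I(\textbf{d})\le s_I(\textbf{c})$ outright, without further subdividing on $k$ or $j$---your additional sub-cases simply rediscover this inequality piece by piece.
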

\begin{proof}
Let $\textbf{c}=\textbf{a}+\textbf{e}_{k}-\textbf{e}_{j},$ $\textbf{d}=\textbf{b}+\textbf{e}_{k}-\textbf{e}_{i},$ $\textbf{f}=\textbf{a}-\textbf{e}_{k}+\textbf{e}_{i}$ and $\textbf{g}=\textbf{b}-\textbf{e}_{k}+\textbf{e}_{j}.$
Let $f$ be the rank function of the polymatroid $P$. Note that $\textbf{b}\in P$, then $$\sum_{i'\in [n]}d_{i'}=\sum_{i'\in [n]}g_{i'}=\sum_{i'\in [n]}b_{i'}=f([n]).$$ We now prove that $\sum\limits_{i'\in I}d_{i'}\leq f(I)$ and $\sum\limits_{i'\in I}g_{i'}\leq f(I)$ for any subset $I\subset [n]$.
\begin{enumerate}
\item[(i)] If $i\notin I$, then  $\sum\limits_{i'\in I}d_{i'}\leq\sum\limits_{i'\in I}c_{i'}\leq f(I).$
\item[(ii)] If $i\in I$, then  $\sum\limits_{i'\in I}d_{i'}\leq\sum\limits_{i'\in I}b_{i'}\leq f(I).$
\item[(iii)] If $j\in I$, then $\sum\limits_{i'\in I}g_{i'}\leq\sum\limits_{i'\in I}f_{i'}\leq f(I).$
\item[(iv)] If $j\notin I$, then $\sum\limits_{i'\in I}g_{i'}\leq\sum\limits_{i'\in I}b_{i'}\leq f(I).$
\end{enumerate}
Therefore, $\textbf{d},\textbf{g}\in P$.
\end{proof}

\begin{example}
For Example \ref{EX}, let $\textbf{a}=\textbf{a}^{10}$ and $\textbf{b}=\textbf{a}^{11}$. Then $i=3$ and $j=2$. We also have that $\textbf{a}^{10}-\textbf{e}_{2}+\textbf{e}_{1}=(1,0,0,2,0)=\textbf{a}^{1}$, $\textbf{a}^{11}-\textbf{e}_{3}+\textbf{e}_{1}=(1,0,0,2,0)=\textbf{a}^{1}$,
$\textbf{a}^{10}+\textbf{e}_{3}+\textbf{e}_{4}=(0,1,1,1,0)=\textbf{a}^{14}$ and $\textbf{a}^{11}+\textbf{e}_{2}-\textbf{e}_{4}=(0,1,1,1,0)=\textbf{a}^{14}\in P$.
\end{example}

Let $P$ be a polymatroid  over $[n]$ with the rank function $f$. For a basis $\textbf{a}\in P$, let
$$\mathcal{I}(\textbf{a}):=\mathcal{I}_{P}(\textbf{a})=\left\{I\subset [n]\bigg| \sum_{i\in I}a_{i}= f(I)\right\}.$$

Obviously, $\emptyset, [n]\in \mathcal{I}(\textbf{a})$ for any basis $\textbf{a}\in P$. The next result holds from \cite[Theorem 44.2]{Schrijver}, since $f$ is submodular.

\begin{lemma}\label{tight}\cite{Schrijver}
Let $P$ be a polymatroid. For any basis $\textbf{a}\in P$, if $I,J\in \mathcal{I}(\textbf{a})$, then $I\cup J,I\cap J\in \mathcal{I}(\textbf{a})$.
\end{lemma}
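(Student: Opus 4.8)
The plan is to derive this from the submodularity of the rank function $f$ together with the defining polymatroid inequalities $\sum_{i\in K}a_{i}\le f(K)$, by a standard ``squeeze'' argument; the only extra ingredient is the elementary fact that the set function $K\mapsto\sum_{i\in K}a_{i}$ is modular.

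Concretely, I would first record that for any $\textbf{a}\in\mathbb{Z}^{n}$ and any $I,J\subset[n]$ one has $\sum_{i\in I}a_{i}+\sum_{i\in J}a_{i}=\sum_{i\in I\cup J}a_{i}+\sum_{i\in I\cap J}a_{i}$, which follows by checking, index by index, that $a_{t}$ contributes equally to both sides (twice when $t\in I\cap J$, once when $t$ lies in exactly one of $I,J$, and not at all otherwise). Then, assuming $I,J\in\mathcal{I}(\textbf{a})$, I would chain this identity with condition (2) of Definition \ref{def polymatroid} and with the tightness of $I$ and $J$:
\[
f(I\cup J)+f(I\cap J)\ \ge\ \sum_{i\in I\cup J}a_{i}+\sum_{i\in I\cap J}a_{i}\ =\ \sum_{i\in I}a_{i}+\sum_{i\in J}a_{i}\ =\ f(I)+f(J)\ \ge\ f(I\cup J)+f(I\cap J).
\]
Hence every inequality in the chain must be an equality. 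In particular $\sum_{i\in I\cup J}a_{i}+\sum_{i\in I\cap J}a_{i}=f(I\cup J)+f(I\cap J)$; since $\textbf{a}\in P$ gives $\sum_{i\in I\cup J}a_{i}\le f(I\cup J)$ and $\sum_{i\in I\cap J}a_{i}\le f(I\cap J)$ separately, both must in fact hold with equality, i.e.\ $I\cup J,\,I\cap J\in\mathcal{I}(\textbf{a})$.

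There is essentially no obstacle here -- this is the textbook property that the tight sets of a submodular function form a lattice -- so the proof occupies only a few lines. The one point requiring (minor) care in the write-up is the final step: deducing from the equality of a sum of two quantities, each separately known to be at most its respective upper bound, that each quantity equals its bound.
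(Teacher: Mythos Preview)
Your argument is correct and is exactly the standard squeeze using submodularity of $f$ together with the modularity of $K\mapsto\sum_{i\in K}a_i$; this is precisely what the paper invokes (it does not spell out a proof but cites Schrijver and notes that the result follows since $f$ is submodular). There is nothing to add.
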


The following conclusion was obtained in \cite[Lemma 4.2]{Bernardi}.
\begin{lemma} \cite{Bernardi}\label{ia}
Let $P$ be a polymatroid  over $[n]$. For any $\textbf{a}\in P$,

\begin{itemize}
  \item [(1)] an index $i\in [n]$ is internally active with respect to $\textbf{a}$
if and only if there exists a subset $I\subset [n]$ such that $i=\min(I)$ and $[n]\setminus I\in \mathcal{I}(\textbf{a})$; and
  \item [(2)] an index $i\in [n]$ is externally active with respect to $\textbf{a}$
if and only if there exists a subset $I'\subset [n]$ such that $i=\min(I')$ and $I'\in \mathcal{I}(\textbf{a})$.
\end{itemize}
\end{lemma}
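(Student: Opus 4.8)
The plan is to reduce both parts to a single elementary membership criterion describing exactly which defining inequalities of $P$ are violated by the perturbed vectors $\textbf{a}+\textbf{e}_{i}-\textbf{e}_{j}$ and $\textbf{a}-\textbf{e}_{i}+\textbf{e}_{j}$, and then to invoke the lattice closure of $\mathcal{I}(\textbf{a})$ from Lemma \ref{tight} to collapse a whole family of witness sets into a single tight set whose minimum is exactly $i$.

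First I would establish the membership criterion: for distinct indices $i,j\in[n]$, we have $\textbf{a}+\textbf{e}_{i}-\textbf{e}_{j}\notin P$ if and only if there is a set $I\in\mathcal{I}(\textbf{a})$ with $i\in I$ and $j\notin I$; symmetrically, $\textbf{a}-\textbf{e}_{i}+\textbf{e}_{j}\notin P$ if and only if there is $I\in\mathcal{I}(\textbf{a})$ with $j\in I$ and $i\notin I$. The point is that perturbing by $\textbf{e}_{i}-\textbf{e}_{j}$ leaves the total sum equal to $f([n])$, so the equality constraint of Definition \ref{def polymatroid} still holds and only an inequality $\sum_{i'\in I}(\cdot)_{i'}\le f(I)$ can fail. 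For a fixed $I$ the left-hand side of this inequality changes by $[i\in I]-[j\in I]$, so it can exceed $f(I)$ only when this increment equals $+1$, that is when $i\in I$ and $j\notin I$, and then only when $\sum_{i'\in I}a_{i'}$ was already equal to $f(I)$, i.e. $I\in\mathcal{I}(\textbf{a})$. This is the only place where the polymatroid inequalities (beyond Lemma \ref{tight}) are used.

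For part (2), the definition says $i$ is externally active iff $\textbf{a}+\textbf{e}_{i}-\textbf{e}_{j}\notin P$ for every $j<i$, which by the criterion means that for each $j<i$ there is $I_{j}\in\mathcal{I}(\textbf{a})$ with $i\in I_{j}$ and $j\notin I_{j}$. For the backward implication a single tight set $I'$ with $\min(I')=i$ already serves as a witness for all $j<i$ simultaneously, since $j<i=\min(I')$ forces $j\notin I'$. For the forward implication I would take $I'=\bigcap_{j<i}I_{j}$, which belongs to $\mathcal{I}(\textbf{a})$ by Lemma \ref{tight}, contains $i$, and omits every $j<i$, so that $\min(I')=i$; the case $i=1$ is vacuous and handled by the trivial witness $[n]\in\mathcal{I}(\textbf{a})$. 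Part (1) is the order-reversed mirror: writing $K=[n]\setminus I$, the desired statement becomes the existence of $K\in\mathcal{I}(\textbf{a})$ with $i\notin K$ and $\{1,\dots,i-1\}\subseteq K$. The criterion shows $i$ is internally active iff for each $j<i$ there is $K_{j}\in\mathcal{I}(\textbf{a})$ with $j\in K_{j}$ and $i\notin K_{j}$, and I would assemble these via $K=\bigcup_{j<i}K_{j}$, which lies in $\mathcal{I}(\textbf{a})$ by Lemma \ref{tight}, contains all $j<i$, and omits $i$; the $i=1$ case uses the witness $\emptyset\in\mathcal{I}(\textbf{a})$.

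The main obstacle lies in the forward directions, where the activity condition only supplies a \emph{separate} tight witness $I_{j}$ (resp. $K_{j}$) for each individual $j<i$. The essential idea is that Lemma \ref{tight} lets one merge this entire family into one tight set --- an intersection for external activity, a union for internal activity --- whose minimum is forced to be exactly $i$. Once the membership criterion is in hand, everything else is routine bookkeeping with minima and set complements.
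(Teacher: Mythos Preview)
Your argument is correct. The membership criterion for $\textbf{a}\pm(\textbf{e}_{i}-\textbf{e}_{j})$ is exactly right, and the use of Lemma~\ref{tight} to merge the per-$j$ witnesses into a single tight set via intersection (for external activity) or union (for internal activity) is the standard and cleanest way to obtain a set whose minimum is forced to be $i$. The boundary case $i=1$ is handled correctly by the trivial tight sets $[n]$ and $\emptyset$.

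There is nothing to compare against in this paper: the authors state Lemma~\ref{ia} without proof, citing it as \cite[Lemma 4.2]{Bernardi}. Your proof is essentially the same as the one in the original reference, so no meaningful divergence arises.
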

The following statement is from \cite[Corllary 12]{Guan3}.
\begin{lemma}\label{a and b'}\cite{Guan3}
 For a polymatroid $P$ on $[n]$, let $\textbf{a}$ and $\textbf{b}$ be distinct bases of $P$ for which there are two indices $i,j\in [n]$ so that $a_{i}<b_{i}$ and $a_{t}=b_{t}$ for all $t\in [n]\setminus \{i,j\}$. Then for any $k>\max\{i,j\}$, we have that
  $k\in \mathrm{Int}(\textbf{a})$ if and only if $k\in \mathrm{Int}(\textbf{b})$.
\end{lemma}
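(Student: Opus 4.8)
\noindent\emph{Plan of proof.} The idea is to never touch Definition~\ref{pop} directly, but to reformulate internal activity through the tight sets and reduce everything to a trivial arithmetic comparison. By Lemma~\ref{ia}(1), $k$ is internally active with respect to $\textbf{a}$ iff there is a set $I\subset[n]$ with $k=\min(I)$ and $[n]\setminus I\in\mathcal{I}(\textbf{a})$. Writing $S=[n]\setminus I$, the condition $k=\min(I)$ is exactly ``$k\notin S$ and $\{1,2,\dots,k-1\}\subseteq S$''. Hence
$$k\in\mathrm{Int}(\textbf{a})\iff \exists\,S\in\mathcal{I}(\textbf{a})\ \text{with}\ \{1,\dots,k-1\}\subseteq S\ \text{and}\ k\notin S,$$
and likewise with $\textbf{b}$ in place of $\textbf{a}$. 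So it suffices to prove that the two families $\{S\in\mathcal{I}(\textbf{a}):\{1,\dots,k-1\}\subseteq S,\ k\notin S\}$ and $\{S\in\mathcal{I}(\textbf{b}):\{1,\dots,k-1\}\subseteq S,\ k\notin S\}$ are simultaneously empty or nonempty; in fact I expect to show they are literally equal.

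This is where the hypothesis $k>\max\{i,j\}$ is used. Any $S$ appearing in either family contains $\{1,\dots,k-1\}$, and since $i<k$ and $j<k$ this forces $i\in S$ and $j\in S$. Now from $\sum_{t\in[n]}a_t=\sum_{t\in[n]}b_t=f([n])$ together with $a_t=b_t$ for all $t\notin\{i,j\}$ one gets $a_i+a_j=b_i+b_j$, and therefore $\sum_{t\in S}a_t=\sum_{t\in S}b_t$ for every $S$ containing both $i$ and $j$. For such an $S$ the equation $\sum_{t\in S}a_t=f(S)$ defining membership in $\mathcal{I}(\textbf{a})$ is the very same equation as $\sum_{t\in S}b_t=f(S)$ defining membership in $\mathcal{I}(\textbf{b})$, so the two families coincide and
$$k\in\mathrm{Int}(\textbf{a})\iff\exists\,S\in\mathcal{I}(\textbf{a})\ \text{with}\ \{1,\dots,k-1\}\subseteq S,\ k\notin S\iff k\in\mathrm{Int}(\textbf{b}),$$
which is the claim.

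Once it is set up this way the argument is essentially a one-liner, so I do not foresee a serious obstacle; the only points needing care are the complementation in translating Lemma~\ref{ia}(1) (it is $[n]\setminus I$, not $I$, that must be tight) and the observation $a_i+a_j=b_i+b_j$. For comparison, a proof working straight from Definition~\ref{pop} --- fixing $\ell<k$ with $\textbf{a}-\textbf{e}_k+\textbf{e}_\ell\in P$ and producing $\ell'<k$ with $\textbf{b}-\textbf{e}_k+\textbf{e}_{\ell'}\in P$ --- handles the sub-cases $\ell=i$ and $\ell=j$ quickly via Lemmas~\ref{transmit} and~\ref{transmit2} (after reducing to $b_i=a_i+1$ along the chain $\textbf{a}+s(\textbf{e}_i-\textbf{e}_j)$), but the sub-case $\ell\notin\{i,j\}$, where the naive choice $\ell'=\ell$ can fail and one must fall back on $\ell'=j$ or $\ell'=i$ depending on which tight set obstructs it, would be the genuinely delicate point and is exactly where Lemma~\ref{tight} enters; the tight-set argument above sidesteps it entirely and also yields both implications at once.
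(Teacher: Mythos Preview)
Your proof is correct. Note, however, that the paper does not actually supply its own proof of this lemma: it is quoted from \cite{Guan3} without argument. The closest comparison is the paper's proof of the companion Lemma~\ref{a and b} (the $\mathrm{Ext}$ version), which uses exactly the same device---the tight-set characterisation of activity from Lemma~\ref{ia}---so your approach is in the same spirit.

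There is one mild difference worth noting. In the paper's proof of Lemma~\ref{a and b}, the tight set $I$ witnessing $k\in\mathrm{Ext}(\textbf{b})$ satisfies $k=\min(I)$, hence $i\notin I$ but possibly $j\in I$; the argument therefore uses the inequality $\sum_{i'\in I}a_{i'}\ge\sum_{i'\in I}b_{i'}$ and establishes only one implication, appealing to symmetry in $(i,\textbf{a})\leftrightarrow(j,\textbf{b})$ for the converse. In your $\mathrm{Int}$ argument the complement $S=[n]\setminus I$ contains $\{1,\dots,k-1\}$ and hence \emph{both} $i$ and $j$, so you get the exact equality $\sum_{t\in S}a_t=\sum_{t\in S}b_t$ and both directions drop out at once. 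That is a genuine (if small) streamlining over the analogue in the paper, and your observation that it bypasses the case analysis one would face working directly from Definition~\ref{pop} is accurate.
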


\begin{example}
For Example \ref{EX}, let $\textbf{a}=\textbf{a}^{3}$ and $\textbf{b}=\textbf{a}^{8}$. Then $i=2$ and $j=1$. It is easy to prove that $\mathrm{Int}(\textbf{a})\cap \{3,4,5\}=\{3,4,5\}=\mathrm{Int}(\textbf{b})\cap \{3,4,5\}$.
\end{example}

The similar result also holds for the externally active indices.

\begin{lemma}\label{a and b}
For a polymatroid $P$ on $[n]$, let $\textbf{a}$ and $\textbf{b}$ be distinct bases of $P$ for which there are  $i,j\in [n]$ with $a_{i}<b_{i}$ and $a_{t}=b_{t}$ for all $t\in [n]\setminus \{i,j\}$. Then for any $k>\max\{i,j\}$, we have that
  $k\in \mathrm{Ext}(\textbf{b})$ if and only if $k\in \mathrm{Ext}(\textbf{a})$.
\end{lemma}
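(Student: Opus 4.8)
The plan is to reduce the statement to a comparison of ``tight'' sets through the characterization of external activity in Lemma~\ref{ia}(2). That lemma says that for any basis $\textbf{x}\in P$ and any index $k\in[n]$, one has $k\in\mathrm{Ext}(\textbf{x})$ if and only if there exists a subset $I'\subseteq[n]$ with $\min(I')=k$ and $I'\in\mathcal{I}(\textbf{x})$. Hence it suffices to show that, among all subsets of $[n]$ whose minimum element is $k$, the bases $\textbf{a}$ and $\textbf{b}$ have exactly the same tight ones; the desired equivalence $k\in\mathrm{Ext}(\textbf{a})\iff k\in\mathrm{Ext}(\textbf{b})$ then follows immediately.

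The main step is the elementary observation that any $I'\subseteq[n]$ with $\min(I')=k$ is contained in $\{k,k+1,\dots,n\}$, so from $k>\max\{i,j\}$ we get $i\notin I'$ and $j\notin I'$. Since $a_t=b_t$ for all $t\in[n]\setminus\{i,j\}$, and in particular for all $t\in I'$, we obtain $\sum_{t\in I'}a_t=\sum_{t\in I'}b_t$. Therefore $\sum_{t\in I'}a_t=f(I')$ holds if and only if $\sum_{t\in I'}b_t=f(I')$, that is, $I'\in\mathcal{I}(\textbf{a})$ if and only if $I'\in\mathcal{I}(\textbf{b})$. Plugging this into the reduction of the first paragraph finishes the proof; the argument is exactly parallel to the one establishing Lemma~\ref{a and b'} for internal activity.

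I do not anticipate a genuine obstacle: once one passes to the tight-set description the argument is essentially one line, and the only thing to watch is the trivial but essential point that $\min(I')=k>\max\{i,j\}$ forces $I'$ to avoid both differing coordinates $i$ and $j$, which is precisely what makes the two bases indistinguishable on $I'$. A more hands-on alternative would be to argue straight from the definition: given $l<k$ with, say, $\textbf{a}+\textbf{e}_k-\textbf{e}_l\in P$, produce some $l'<k$ with $\textbf{b}+\textbf{e}_k-\textbf{e}_{l'}\in P$ by splitting into the cases $l=j$, $l=i$ and $l\notin\{i,j\}$ and routing the intermediate vectors through Lemmas~\ref{transmit} and~\ref{transmit2} (using also that $\textbf{a}+\textbf{e}_i-\textbf{e}_j\in P$, which is forced by $b_i>a_i$). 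That route is workable but noticeably more delicate, especially when $l\notin\{i,j\}$, so the tight-set proof is the one I would write up.
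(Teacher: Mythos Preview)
Your proof is correct and follows essentially the same route as the paper: both reduce to the tight-set characterization of external activity via Lemma~\ref{ia}(2), observing that a set $I'$ with $\min(I')=k$ avoids the differing coordinates so that the relevant coordinate sums for $\textbf{a}$ and $\textbf{b}$ coincide. The only cosmetic difference is that the paper proves the single implication $k\in\mathrm{Ext}(\textbf{b})\Rightarrow k\in\mathrm{Ext}(\textbf{a})$ for $k>i$ (using $i\notin I'$ and the inequality $\sum_{I'}a_{i'}\ge\sum_{I'}b_{i'}$) and then appeals to symmetry, whereas you handle both directions at once by excluding both $i$ and $j$ from $I'$.
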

\begin{proof}
It is enough to prove that for any $k>i$,
if $k\in \mathrm{Ext}(\textbf{b})$, then $k\in \mathrm{Ext}(\textbf{a})$. Let $f$ be the rank function of $P$.
 If $k\in \mathrm{Ext}(\textbf{b})$, then by Lemma \ref{ia} (2), there exists a  subset $I\subset [n]$ so that $k=\min(I)$ and $I\in \mathcal{I}(\textbf{b})$. Note that $$f(I)\geq\sum_{i'\in  I}a_{i'}\geq\sum_{i'\in  I}b_{i'}=f(I)$$ as $i\notin I$. Then $I\in \mathcal{I}(\textbf{a})$. Therefore, $k\in \mathrm{Ext}(\textbf{a})$ by using Lemma \ref{ia} (2) again.
\end{proof}

\begin{example}
For Example \ref{EX}, let $\textbf{a}=\textbf{a}^{10}$ and $\textbf{b}=\textbf{a}^{11}$. Then $i=3$ and $j=2$. It is easy to prove that $\mathrm{Ext}(\textbf{a})\cap \{4,5\}=\{4\}=\mathrm{Ext}(\textbf{b})\cap \{4,5\}$.
\end{example}

Now let us focus on the smaller indices.

\begin{lemma}\label{a and b2}
Let $P$ be a polymatroid on $[n]$. Let $\textbf{a}$ and $\textbf{b}$ be distinct bases of $P$ for which there are two indices $i,j\in [n]$ so that $a_{i}<b_{i}$ and $a_{t}=b_{t}$ for all $t\in [n]\setminus \{i,j\}$. Let $t=\min\{i,j\}$. Then if there is a subset $I\subset [n]\setminus\{i,j\}$ with $[t-1]\subset I$ so that $[n]\setminus I\in \mathcal{I}(\textbf{a})\cap \mathcal{I}(\textbf{b})$, then for any $k<t$,
\begin{itemize}
  \item [(1)]  $k\in \mathrm{Int}(\textbf{a})$ if and only if $k\in \mathrm{Int}(\textbf{b})$; and
  \item [(2)]  $k\in \mathrm{Ext}(\textbf{a})$ if and only if $k\in \mathrm{Ext}(\textbf{b})$.
\end{itemize}
\end{lemma}
\begin{proof}
It is enough to prove that
\begin{itemize}
  \item [($1'$)]  if $k\in \mathrm{Int}(\textbf{a})$, then $k\in \mathrm{Int}(\textbf{b})$; and
  \item [($2'$)]  if $k\in \mathrm{Ext}(\textbf{a})$, then $k\in \mathrm{Ext}(\textbf{b})$.
\end{itemize}

Let $f$ be the rank function of  $P$.

We first prove ($1'$). By Lemma  \ref{ia} (1), there exists a  subset $I'\subset [n]$ so that $k=\min(I')$ and $[n]\setminus I'\in \mathcal{I}(\textbf{a})$ as $k\in \mathrm{Int}(\textbf{a})$. Set $I''=([n]\setminus I)\cup I'$. Then by Lemma \ref{tight},  $[n]\setminus I''=I\cap([n]\setminus I')\in \mathcal{I}(\textbf{a})$. Note that $j\in  I''$ as $j\in  I$. Then $$f([n]\setminus I'')\geq \sum_{t'\in [n]\setminus I''} b_{t'}\geq\sum_{t'\in [n]\setminus I''} a_{t'}=f([n]\setminus I'').$$ We have that $[n]\setminus I''\in \mathcal{I}(\textbf{b})$. Hence, $k\in \mathrm{Int}(\textbf{b})$ as $k=\min(I'')$.

Next, we prove ($2'$). If $k\in \mathrm{Ext}(\textbf{a})$, then by Lemma  \ref{ia} (2), there exists a  subset $S'$ so that $k=\min(S')$ and $S'\in \mathcal{I}(\textbf{a})$. Then $S= I\cap S'\in \mathcal{I}(\textbf{a})$ by Lemma \ref{tight}. Note that $j\notin S$. Then $$f(S)\geq \sum_{t'\in S} b_{t'}\geq\sum_{t'\in S} a_{t'}=f(S).$$ This implies that $S\in \mathcal{I}(\textbf{b})$.
Hence, $k\in \mathrm{Ext}(\textbf{b})$ as $k=\min(S)$.
\end{proof}

\begin{example}
For Example \ref{EX}, let $\textbf{a}=\textbf{a}^{5}$ and $\textbf{b}=\textbf{a}^{6}$. Then $i=5$ and $j=4$. Note that $[n]\setminus I=\{1,2,3\}\in \mathcal{I}(\textbf{a})\cap \mathcal{I}(\textbf{b})$. It is easy to prove that $\mathrm{Int}(\textbf{a})\cap \{1,2,3\}=\{1,2\}=\mathrm{Int}(\textbf{b})\cap \{1,2,3\}$ and $\mathrm{Ext}(\textbf{a})\cap \{1,2,3\}=\{1,3\}=\mathrm{Ext}(\textbf{b})\cap \{1,2,3\}$.
\end{example}

Similarly, we can obtain the following result.
\begin{lemma}\label{a and b3}
Let $P$ be a polymatroid on $[n]$. Let $\textbf{a}$ and $\textbf{b}$ be distinct bases of $P$ for which there are  $i,j\in [n]$ so that $a_{i}<b_{i}$ and $a_{t}=b_{t}$ for all $t\in [n]\setminus \{i,j\}$. Let $t=\min\{i,j\}$. Then if there is a subset $I\subset [n]\setminus [t-1]$ with $\{i,j\}\subset I$ so that $I\in \mathcal{I}(\textbf{a})\cap \mathcal{I}(\textbf{b})$, then for any $k<t$,
\begin{itemize}
  \item [(1)]  $k\in \mathrm{Int}(\textbf{a})$ if and only if $k\in \mathrm{Int}(\textbf{b})$; and
  \item [(2)]  $k\in \mathrm{Ext}(\textbf{a})$ if and only if $k\in \mathrm{Ext}(\textbf{b})$.
\end{itemize}
\end{lemma}

\begin{example}
For Example \ref{EX}, let $\textbf{a}=\textbf{a}^{10}$ and $\textbf{b}=\textbf{a}^{12}$. Then $i=5$ and $j=4$. Note that $I=\{4,5\}\in \mathcal{I}(\textbf{a})\cap \mathcal{I}(\textbf{b})$. It is easy to prove that $\mathrm{Int}(\textbf{a})\cap \{1,2,3\}=\{1,3\}=\mathrm{Int}(\textbf{b})\cap \{1,2,3\}$ and $\mathrm{Ext}(\textbf{a})\cap \{1,2,3\}=\{1,2\}=\mathrm{Ext}(\textbf{b})\cap \{1,2,3\}$.
\end{example}

\begin{remark}
Let $P$ be a polymatroid on $[n]$. The \emph{dual} polymatroid  of $P$ is denoted by $-P$, where
$$-P:=\{(-a_{1},\ldots,-a_{n})|(a_{1},\ldots,a_{n})\in P\}.$$
It is easy to see that for any base $\textbf{a}$ and for any indies $k,k'$, if $\textbf{b}=\textbf{a}+\textbf{e}_{k}-\textbf{e}_{k'}\in P$, then $-\textbf{b}=-\textbf{a}-\textbf{e}_{k}+\textbf{e}_{k'}\in -P$.

Then in Lemma \ref{transmit2} (2), if $\textbf{a}-\textbf{e}_{k}+\textbf{e}_{i}\in P$, then $-\textbf{a}+\textbf{e}_{k}-\textbf{e}_{i}\in -P$. Note that $-a_{i}>-b_{i}$ in $-P$, then $-\textbf{b}+\textbf{e}_{k}-\textbf{e}_{j}\in -P$ by Lemma \ref{transmit2} (1), so,
$\textbf{b}-\textbf{e}_{k}+\textbf{e}_{j}\in P$.

Moreover, for any base $\textbf{a}$ and for any subset $I$, we have that $I\in \mathcal{I}_{P}(\textbf{a})$ if and only if $[n]\setminus I\in \mathcal{I}_{-P}(\textbf{-a})$.  For any index $k$, we have $k\in \mathrm{Int}_{P}(\textbf{a})$ if and if only $k\in \mathrm{Ext}_{-P}(\textbf{-a})$, and $k\in \mathrm{Ext}_{P}(\textbf{a})$ if and if only $k\in \mathrm{Int}_{-P}(\textbf{-a})$. Hence, Lemmas \ref{a and b} and \ref{a and b3} can also be proved by using duality again for Lemmas \ref{a and b'} and \ref{a and b2}, respectively.
\end{remark}
\section{Proof of Theorem \ref{$S_{n}$-invariance}}
\noindent

In this section, we give an elementary proof of Theorem \ref{$S_{n}$-invariance}.

It is enough to prove that $$\mathscr{T}_{P}(x,y)=\mathscr{T}_{w(P)}(x,y),$$ where $$w(P):=\{(a_{1},\ldots,a_{h+1},a_{h},\ldots,a_{n})\mid(a_{1},\ldots,a_{h},a_{h+1},\ldots,a_{n})\in P\}.$$ Namely, $$1<2<\cdots<h-1<h+1<h<\cdots<n \ \text{in} \ w(P).$$

Note that for any vector $\textbf{a}\in \mathbb{Z}^{n}$, we have that $\textbf{a}\in P$ if and only if $w(\textbf{a})\in w(P)$. Moreover, for two distinct indices $i,j\in [n]$, the vector $\textbf{a}+\textbf{e}_{i}-\textbf{e}_{j}\in P$ if and only if $w(\textbf{a})+w(\textbf{e}_{i})-w(\textbf{e}_{j})\in w(P)$. Hence, for simplicity, we will state $\textbf{a}+\textbf{e}_{i}-\textbf{e}_{j}\in P$ if $\textbf{a}+\textbf{e}_{i}-\textbf{e}_{j}\in P$ or $w(\textbf{a})+w(\textbf{e}_{i})-w(\textbf{e}_{j})\in w(P)$ holds.

It is easy to see the following Facts 1,  2 and 3.

\textbf{Fact 1.} For any $\textbf{a}\in P$ and for any $i\in [n]\setminus \{h,h+1\}$,
\begin{enumerate}
  \item [(i)] $i\in \mathrm{Int}_{P}(\textbf{a})$ if and only if $i\in \mathrm{Int}_{w(P)}(w(\textbf{a}))$;
  \item [(ii)] $i\in \mathrm{Ext}_{P}(\textbf{a})$ if and only if $i\in \mathrm{Ext}_{w(P)}(w(\textbf{a}))$.
\end{enumerate}

\textbf{Fact 2.} For any $\textbf{a}\in P$,
 \begin{enumerate}
  \item [(i)] if $h\notin \mathrm{Int}_{P}(\textbf{a})$, then $h\notin \mathrm{Int}_{w(P)}(w(\textbf{a}))$;
  \item [(ii)] if $h\notin \mathrm{Ext}_{P}(\textbf{a})$, then $h\notin \mathrm{Ext}_{w(P)}(w(\textbf{a}))$.
\end{enumerate}

\textbf{Fact 3.} For any $\textbf{a}\in P$,
\begin{enumerate}
  \item [(i)] if $h+1\notin \mathrm{Int}_{w(P)}(w(\textbf{a}))$, then $h+1\notin \mathrm{Int}_{P}(\textbf{a})$;
  \item [(ii)] if $h+1\notin \mathrm{Ext}_{w(P)}(w(\textbf{a}))$, then $h+1\notin \mathrm{Ext}_{P}(\textbf{a})$.
\end{enumerate}

For any $\textbf{a}\in P$,
define its  associated subset $\mathcal{F}_{\textbf{a}}\subset P$ for $\textbf{a}$ in $P$ by
\begin{eqnarray*}
\mathcal{F}_{\textbf{a}}: &=&\left\{(b_{1},\ldots,b_{n})\in P\bigg | \  b_{i}=a_{i} \
\text{for all}\ i\in [n]\setminus \{h,h+1\}\right\}\\
&=&\{\textbf{b}^{k}:k=1,2,\ldots,l\}.
\end{eqnarray*}
Define $\textbf{b}^{1}\in \mathcal{F}_{\textbf{a}}$ such that $b^{1}_{h}$ is the smallest, and $\textbf{b}^{k+1}=\textbf{b}^{k}+\textbf{e}_{h}-\textbf{e}_{h+1}$. Then there is some index $m$ such that $\textbf{b}^{m}=\textbf{a}$, and  $\textbf{b}^{l} \in \mathcal{F}_{\textbf{a}}$ such that $b^{l}_{h}$ is the largest.

Note that $P$ is the union of such pairwise disjoint associated sets $\mathcal{F}_{\textbf{a}}$. It is enough to prove
\begin{eqnarray*}
&&\sum _{\textbf{b}\in \mathcal{F}_{\textbf{a}}}x^{oi_{P}(\textbf{b})}y^{oe_{P}(\textbf{b})}(x+y-1)^{ie_{P}(\textbf{b})}\\
&=& \sum _{w(\textbf{b})\in \mathcal{F}_{w(\textbf{a})}}x^{oi_{w(P)}(w(\textbf{b}))}y^{oe_{w(P)}(w(\textbf{b}))}(x+y-1)^{ie_{w(P)}(w(\textbf{b}))}.~~~~~~~(*)
\end{eqnarray*}

For any $\textbf{a}\in P$,  let $$\textbf{a}'=\textbf{a}+\textbf{e}_{h}-\textbf{e}_{h+1}$$ and
$$\textbf{a}''=\textbf{a}+\textbf{e}_{h+1}-\textbf{e}_{h}.$$

To prove the above equation, we need the following claims.

\textbf{Claim 1.} Assume that $\textbf{a}'\in P$.

\begin{enumerate}
  \item [(i)] If $h\notin \mathrm{Int}_{P}(\textbf{a})$, then $h+1\notin \mathrm{Int}_{w(P)}(w(\textbf{a}))$.
  \item [(ii)] If $h+1\notin \mathrm{Ext}_{w(P)}(w(\textbf{a}))$, then $h\notin \mathrm{Ext}_{P}(\textbf{a})$.
\end{enumerate}

\emph{Proof of Claim 1.} (i) If $h\notin \mathrm{Int}_{P}(\textbf{a})$, then there is some $k<h$ so that $\textbf{a}+\textbf{e}_{k}-\textbf{e}_{h}\in P$. Since $\textbf{a}'=\textbf{a}+\textbf{e}_{h}-\textbf{e}_{h+1} \in P$, by Lemma \ref{transmit}, we have that $\textbf{a}+\textbf{e}_{k}-\textbf{e}_{h+1}\in P$. Hence, $h+1\notin \mathrm{Int}_{w(P)}(w(\textbf{a}))$.

(ii) If $h+1\notin \mathrm{Ext}_{w(P)}(w(\textbf{a}))$, then there is some $k'<h$ so that $\textbf{a}-\textbf{e}_{k'}+\textbf{e}_{h+1}\in P$. By Lemma \ref{transmit}, $\textbf{a}-\textbf{e}_{k'}+\textbf{e}_{h}\in P$ as $\textbf{a}' \in P$. Therefore, $h\notin \mathrm{Ext}_{P}(\textbf{a})$.

  \textbf{Claim 2.} Assume that $\textbf{a}'\notin P$.

\begin{enumerate}
  \item [(i)] If $h+1\notin \mathrm{Int}_{P}(\textbf{a})$, then $h+1\notin \mathrm{Int}_{w(P)}(w(\textbf{a}))$.
  \item [(ii)] If $h\notin \mathrm{Ext}_{w(P)}(w(\textbf{a}))$, then $h\notin \mathrm{Ext}_{P}(\textbf{a})$.
\end{enumerate}

\emph{Proof of Claim 2.} (i) If $h+1\notin \mathrm{Int}_{P}(\textbf{a})$, then there is some index $k<h+1$ so that $\textbf{a}+\textbf{e}_{k}-\textbf{e}_{h+1}\in P$. Note that $k\neq h$ as $\textbf{a}'=\textbf{a}+\textbf{e}_{h}-\textbf{e}_{h+1}\notin P$. This implies that $k<h$. Hence, $h+1\notin \mathrm{Int}_{w(P)}(w(\textbf{a}))$.

(ii) If $h\notin \mathrm{Ext}_{w(P)}(w(\textbf{a}))$, then there is some index $k'<h$ or $k'=h+1$ so that  $\textbf{a}-\textbf{e}_{k'}+\textbf{e}_{h}\in P$. Note also that $k'<h$ as $\textbf{a}'=\textbf{a}+\textbf{e}_{h}-\textbf{e}_{h+1}\notin P$. This implies that $h\notin \mathrm{Ext}_{P}(\textbf{a})$.

We have Claims 3 and 4 similar to Claims 1 and 2.

\textbf{Claim 3.} Assume that $\textbf{a}''\in P$.
\begin{enumerate}
  \item [(i)] If $h+1\notin \mathrm{Int}_{w(P)}(w(\textbf{a}))$, then $h\notin \mathrm{Int}_{P}(\textbf{a})$.
  \item [(ii)] If $h\notin \mathrm{Ext}_{P}(\textbf{a})$, then $h+1\notin \mathrm{Ext}_{w(P)}(w(\textbf{a}))$.
\end{enumerate}

 \textbf{Claim 4.} Assume that $\textbf{a}''\notin P$.

\begin{enumerate}
  \item [(i)] If $h\notin \mathrm{Int}_{w(P)}(w(\textbf{a}))$, then $h\notin \mathrm{Int}_{P}(\textbf{a})$.
  \item [(ii)] If $h+1\notin \mathrm{Ext}_{P}(\textbf{a})$, then $h+1\notin \mathrm{Ext}_{w(P)}(w(\textbf{a}))$.
\end{enumerate}

We now consider the following two cases for $\mathcal{F}_{\textbf{a}}$.

\textbf{Case 1.} Assume that $|\mathcal{F}_{\textbf{a}}|=1$, that is, $\mathcal{F}_{\textbf{a}}=\{\textbf{a}\}$.

In this case,  both $\textbf{a}'\notin P$ and $\textbf{a}''\notin P$ hold. Then by Facts 2 and 3, and Claims 2 and 4, we have
\begin{enumerate}
  \item [(1-i)] $h\in \mathrm{Int}_{P}(\textbf{a})$ if and only if $h\in \mathrm{Int}_{w(P)}(w(\textbf{a}))$ (by Claim 4 (i) and Fact 2 (i));
  \item [(1-ii)] $h+1\in \mathrm{Int}_{P}(\textbf{a})$ if and only if $h+1\in \mathrm{Int}_{w(P)}(w(\textbf{a}))$ (by Fact 3 (i) and Claim 2 (i));
   \item [(1-iii)] $h\in \mathrm{Ext}_{P}(\textbf{a})$ if and only if $h\in \mathrm{Ext}_{w(P)}(w(\textbf{a}))$ (by Fact 2 (ii) and Claim 2 (ii));
  \item [(1-iv)] $h+1\in \mathrm{Ext}_{P}(\textbf{a})$ if and only if $h+1\in \mathrm{Ext}_{w(P)}(w(\textbf{a}))$ (by Fact 3 (ii) and Claim 4 (ii)).
\end{enumerate}

Combining with Fact 1, for any $i\in [n]$, we have that
$i\in \mathrm{Int}_{P}(\textbf{a})$ if and only if $i\in \mathrm{Int}_{w(P)}(w(\textbf{a}))$, and $i\in \mathrm{Ext}_{P}(\textbf{a})$ if and only if $i\in \mathrm{Ext}_{w(P)}(w(\textbf{a}))$. Hence,
 $$x^{oi_{P}(\textbf{a})}y^{oe_{P}(\textbf{a})}(x+y-1)^{ie_{P}(\textbf{a})}=
x^{oi_{w(P)}(w(\textbf{a}))}y^{oe_{w(P)}(w(\textbf{a}))}(x+y-1)^{ie_{w(P)}(w(\textbf{a}))}.$$

Hence, the equation (*) holds since $\mathcal{F}_{\textbf{a}}=\{\textbf{a}\}$.

\begin{example}
For Example \ref{EX} and $h=3$, let $\textbf{a}^{3}=(1,1,0,1,0)$. We know that $\textbf{a}^{3}-\textbf{e}_{3}+\textbf{e}_{4}=(1,1,-1,2,0)\notin P$ and $\textbf{a}^{3}-\textbf{e}_{4}+\textbf{e}_{3}=(1,1,1,0,0)\notin P$. So, in this case, $\mathcal{F}_{\textbf{a}^{3}}=\{\textbf{a}^{3}\}$. In  both the order $1<2<3<4<5$ and the order $1<2<4<3<5$, we have that $\mathrm{Int}_{P}(\textbf{a}^{3})=\mathrm{Int}_{w(P)}(w(\textbf{a}^{3}))=\{1,2,3,4,5\}$ and $\mathrm{Ext}_{P}(\textbf{a}^{3})=\mathrm{Ext}_{w(P)}(w(\textbf{a}^{3}))=\{1\}$.
\end{example}

\textbf{Case 2.} Assume that $|\mathcal{F}_{\textbf{a}}|\geq 2$.

We firstly prove $$x^{oi_{P}(\textbf{a})}y^{oe_{P}(\textbf{a})}(x+y-1)^{ie_{P}(\textbf{a})}=
x^{oi_{w(P)}(w(\textbf{a}))}y^{oe_{w(P)}(w(\textbf{a}))}(x+y-1)^{ie_{w(P)}(w(\textbf{a}))}$$ for any $\textbf{a}\in \mathcal{F}_{\textbf{a}} \setminus \{\textbf{b}^{1},\textbf{b}^{l}\}$. In this case, we have $\textbf{a}'\in P$ and $\textbf{a}''\in P$. Then
\begin{enumerate}
  \item [(2-i)] $h\notin \mathrm{Int}_{w(P)}(w(\textbf{a}))\cup \mathrm{Ext}_{w(P)}(w(\textbf{a}))$ and $h+1\notin \mathrm{Int}_{P}(\textbf{a})\cup \mathrm{Ext}_{P}(\textbf{a})$;
\end{enumerate}
by  Claims 1 and 3, we have
\begin{enumerate}
  \item [(2-ii)] $h\in \mathrm{Int}_{P}(\textbf{a})$ if and only if $h+1\in \mathrm{Int}_{w(P)}(w(\textbf{a}))$ (by Claims 1 (i) and 3 (i));
   \item [(2-iii)] $h\in \mathrm{Ext}_{P}(\textbf{a})$ if and only if $h+1\in \mathrm{Ext}_{w(P)}(w(\textbf{a}))$ (by Claims 1 (ii) and 3 (ii)).
\end{enumerate}
Combining with Fact 1, we also have $$x^{oi_{P}(\textbf{a})}y^{oe_{P}(\textbf{a})}(x+y-1)^{ie_{P}(\textbf{a})}=
x^{oi_{w(P)}(w(\textbf{a}))}y^{oe_{w(P)}(w(\textbf{a}))}(x+y-1)^{ie_{w(P)}(w(\textbf{a}))}.$$

Next let $$A=x^{oi_{P}(\textbf{b}^{1})}y^{oe_{P}(\textbf{b}^{1})}(x+y-1)^{ie_{P}(\textbf{b}^{1})},$$
$$B=x^{oi_{P}(\textbf{b}^{l})}y^{oe_{P}(\textbf{b}^{l})}(x+y-1)^{ie_{P}(\textbf{b}^{l})},$$
$$C=x^{oi_{w(P)}(w(\textbf{b}^{1}))}y^{oe_{w(P)}(w(\textbf{b}^{1}))}(x+y-1)^{ie_{w(P)}(w(\textbf{b}^{1}))},$$ and
$$D=x^{oi_{w(P)}(w(\textbf{b}^{l}))}y^{oe_{w(P)}(w(\textbf{b}^{l}))}(x+y-1)^{ie_{w(P)}(w(\textbf{b}^{l}))}.$$
To prove the equation (*) is enough to prove that $$A+B=C+D.$$

\begin{example}
For Example \ref{EX} and $h=4$, let $\textbf{a}^{7}=(1,0,0,1,1)$. We know that $\textbf{a}^{7}-\textbf{e}_{4}+\textbf{e}_{5}=(1,0,0,0,2)=\textbf{a}^{2}\in P$ and $\textbf{a}^{7}-\textbf{e}_{5}+\textbf{e}_{4}=(1,0,0,2,0)=\textbf{a}^{1}\in P$. In this case, $\mathcal{F}_{\textbf{a}^{7}}=\{\textbf{a}^{1},\textbf{a}^{2},\textbf{a}^{7}\}$. In  the order $1<2<3<4<5$, we have that $\mathrm{Int}_{P}(\textbf{a}^{7})=\{1,2,3\}$ and $\mathrm{Ext}_{P}(\textbf{a}^{7})=\{1,4\}$.
In the order $1<2<3<5<4$, we have that $\mathrm{Int}_{w(P)}(w(\textbf{a}^{7}))=\{1,2,3\}$ and $\mathrm{Ext}_{w(P)}(w(\textbf{a}^{7}))=\{1,5\}$. It is enough to prove that
\begin{eqnarray*}
&&\sum _{\textbf{b}\in \{\textbf{a}^{1},\textbf{a}^{2}\}}x^{oi_{P}(\textbf{b})}y^{oe_{P}(\textbf{b})}(x+y-1)^{ie_{P}(\textbf{b})}\\
&=& \sum _{w(\textbf{b})\in \{\textbf{a}^{1},\textbf{a}^{2}\}}x^{oi_{w(P)}(w(\textbf{b}))}y^{oe_{w(P)}(w(\textbf{b}))}(x+y-1)^{ie_{w(P)}(w(\textbf{b}))}.
\end{eqnarray*}
\end{example}

By the definitions of $\textbf{b}^{1}$ and $\textbf{b}^{l}$, we have that $\textbf{b}^{1}+\textbf{e}_{h}-\textbf{e}_{h+1}\in P$, $\textbf{b}^{1}+\textbf{e}_{h+1}-\textbf{e}_{h}\notin P$, $\textbf{b}^{l}+\textbf{e}_{h+1}-\textbf{e}_{h}\in P$ and $\textbf{b}^{l}+\textbf{e}_{h}-\textbf{e}_{h+1}\notin P$. Then
\begin{enumerate}
  \item [(3-i)] $h+1\notin \mathrm{Int}_{P}(\textbf{b}^{1})$ and $h\notin \mathrm{Ext}_{w(P)}(w(\textbf{b}^{1}))$ (due to $\textbf{b}^{1}+\textbf{e}_{h}-\textbf{e}_{h+1}\in P$);

  \item [(3-ii)] $h\in \mathrm{Int}_{P}(\textbf{b}^{1})$ if and only if $h\in \mathrm{Int}_{w(P)}(w(\textbf{b}^{1}))$ (by Fact 2 (i) and Claim 4 (i) as $\textbf{b}^{1}+\textbf{e}_{h+1}-\textbf{e}_{h}\notin P$);
   \item [(3-iii)] $h+1\in \mathrm{Ext}_{P}(\textbf{b}^{1})$ if and only if $h+1\in \mathrm{Ext}_{w(P)}(w(\textbf{b}^{1}))$ (by Fact 3 (ii) and Claim 4 (ii) as $\textbf{b}^{1}+\textbf{e}_{h+1}-\textbf{e}_{h}\notin P$);
   \item [(3-iv)] $h\notin \mathrm{Int}_{w(P)}(w(\textbf{b}^{l}))$ and $h+1\notin \mathrm{Ext}_{P}(\textbf{b}^{l})$ (due to $\textbf{b}^{l}+\textbf{e}_{h+1}-\textbf{e}_{h}\in P$);
   \item [(3-v)] $h+1\in \mathrm{Int}_{P}(\textbf{b}^{l})$ if and only if  $h+1\in \mathrm{Int}_{w(P)}(w(\textbf{b}^{l}))$  (by Fact 3 (i) and Claim 2 (i) as $\textbf{b}^{l}+\textbf{e}_{h}-\textbf{e}_{h+1}\notin P$);
   \item [(3-vi)] $h\in \mathrm{Ext}_{P}(\textbf{b}^{l})$ if and only if  $h\in \mathrm{Ext}_{w(P)}(w(\textbf{b}^{l}))$ (by Fact 2 (ii) and Claim 2 (ii) as $\textbf{b}^{l}+\textbf{e}_{h}-\textbf{e}_{h+1}\notin P$).
\end{enumerate}

Combining with Fact 1,
\begin{enumerate}
  \item [($3'$-i)] for any $i\in [n]\setminus \{h+1\}$, we have that $i\in \mathrm{Int}_{P}(\textbf{b}^{1})$ if and only if $i\in \mathrm{Int}_{w(P)}(w(\textbf{b}^{1}))$,  and $i\in \mathrm{Ext}_{P}(\textbf{b}^{l})$ if and only if  $i\in \mathrm{Ext}_{w(P)}(w(\textbf{b}^{l}))$;

  \item [($3'$-ii)] for any $j\in [n]\setminus \{h\}$, we have that $j\in \mathrm{Ext}_{P}(\textbf{b}^{1})$ if and only if  $j\in \mathrm{Ext}_{w(P)}(w(\textbf{b}^{1}))$,  and $j\in \mathrm{Int}_{P}(\textbf{b}^{l})$ if and only if  $j\in \mathrm{Int}_{w(P)}(w(\textbf{b}^{l}))$.
\end{enumerate}

We need the following Claims 5-8.

\textbf{Claim 5.} If $h+1\notin \mathrm{Int}_{w(P)}(w(\textbf{b}^{1}))$, then $h\notin \mathrm{Int}_{P}(\textbf{b}^{l})$.

\emph{Proof of Claim 5.}  If $h+1\notin \mathrm{Int}_{w(P)}(w(\textbf{b}^{1}))$, then there exists some $k<h$ so that $\textbf{b}^{1}+\textbf{e}_{k}-\textbf{e}_{h+1}\in P$ (Note that $w(\textbf{b}^{1})+\textbf{e}_{k}-w(\textbf{e}_{h+1})\in w(P)$ if and only if $\textbf{b}^{1}+\textbf{e}_{k}-\textbf{e}_{h+1}\in P$). By Lemma \ref{transmit2} (1),  $\textbf{b}^{l}+\textbf{e}_{k}-\textbf{e}_{h}\in P$ (set $h+1=j$, $h=i$, $\textbf{b}^{1}=\textbf{a}$ and $\textbf{b}^{l}=\textbf{b}$). Hence, $h\notin \mathrm{Int}_{P}(\textbf{b}^{l})$.

\textbf{Claim 6.} Assume that $h+1\in \mathrm{Int}_{w(P)}(w(\textbf{b}^{1}))$. Then

\begin{enumerate}
  \item [(i)] $h+1\in \mathrm{Int}_{w(P)}(w(\textbf{b}^{l}))$ and  $h\in \mathrm{Int}_{P}(\textbf{b}^{1})\cap \mathrm{Int}_{P}(\textbf{b}^{l})$;
  \item [(ii)] for any $i\in [n]\setminus\{h,h+1\}$, we have that $i\in \mathrm{Int}_{P}(\textbf{b}^{1})$ if and only if $i\in \mathrm{Int}_{P}(\textbf{b}^{l})$, and $i\in \mathrm{Ext}_{P}(\textbf{b}^{1})$ if and only if $i\in \mathrm{Ext}_{P}(\textbf{b}^{l})$.
\end{enumerate}

\emph{Proof of Claim 6.} (i) By Lemma \ref{ia} (1), there is a subset $I\in \mathcal{I}(\textbf{b}^{1})$ so that  $[h-1]\subset I\subset[n]\setminus\{h+1\}$ as $h+1\in \mathrm{Int}_{w(P)}(w(\textbf{b}^{1}))$. Note that $$f(I)=\sum_{t\in I} b^{1}_{t}\leq \sum_{t\in I} b^{l}_{t}\leq f(I).$$ Then $h\notin I$ and $I\in \mathcal{I}(\textbf{b}^{l})$. Hence, $I\in \mathcal{I}(w(\textbf{b}^{l}))$, which implies $h+1\in \mathrm{Int}_{w(P)}(w(\textbf{b}^{l}))$ and  $h\in \mathrm{Int}_{P}(\textbf{b}^{1})\cap \mathrm{Int}_{P}(\textbf{b}^{l})$.

(ii) We know that $[h-1]\subset I\subset[n]\setminus\{h,h+1\}$ and $I\in \mathcal{I}(\textbf{b}^{1})\cap \mathcal{I}(\textbf{b}^{l})$ by the proof of Claim 6 (i). Then the conclusion holds for any $i<h$ by Lemma \ref{a and b2} (set $h=i$, $h+1=j$, $I=I$, $\textbf{b}^{1}=\textbf{a}$ and $\textbf{b}^{l}=\textbf{b}$). It follows from Lemmas  \ref{a and b'} and \ref{a and b} for any $i>h+1$.

\textbf{Claim 7.} If $h\notin \mathrm{Ext}_{P}(\textbf{b}^{1})$, then $h+1\notin \mathrm{Ext}_{w(P)}(w(\textbf{b}^{l}))$.

\emph{Proof of Claim 7.}   If $h\notin \mathrm{Ext}_{P}(\textbf{b}^{1})$, then there exists some $k<h$ so that $\textbf{b}^{1}-\textbf{e}_{k}+\textbf{e}_{h}\in P$. By Lemma \ref{transmit2} (2), $\textbf{b}^{l}-\textbf{e}_{k}+\textbf{e}_{h+1}\in P$ (Set $h+1=j$, $h=i$, $\textbf{b}^{1}=\textbf{a}$ and $\textbf{b}^{l}=\textbf{b}$). Hence, $h+1\notin \mathrm{Ext}_{w(P)}(w(\textbf{b}^{l}))$.

\textbf{Claim 8.} Assume that $h\in\mathrm{Ext}_{P}(\textbf{b}^{1})$. Then

\begin{enumerate}
  \item [(i)] $h+1\in \mathrm{Ext}_{w(P)}(w(\textbf{b}^{1}))\cap \mathrm{Ext}_{w(P)}(w(\textbf{b}^{l}))$ and $h\in \mathrm{Ext}_{P}(\textbf{b}^{l})$,
  \item [(ii)] for any $i\in [n]\setminus\{h,h+1\}$, we have that $i\in \mathrm{Int}_{P}(\textbf{b}^{1})$ if and only if $i\in \mathrm{Int}_{P}(\textbf{b}^{l})$, and $i\in \mathrm{Ext}_{P}(\textbf{b}^{1})$ if and only if $i\in \mathrm{Ext}_{P}(\textbf{b}^{l})$.
\end{enumerate}

\emph{Proof of Claim 8.} (i) Since $h\in \mathrm{Ext}_{P}(\textbf{b}^{1})$, by Lemma  \ref{ia} (2), there is a subset $I\subset[n]$ so that $h=\min(I)$ and $I\in \mathcal{I}(\textbf{b}^{1})$. Then $h+1\in I\in \mathcal{I}(\textbf{b}^{l})$ as $f(I)=\sum_{t\in I}b^{1}_{t}\leq \sum_{t\in I}b^{l}_{t}\leq f(I)$. Hence, $h+1\in \mathrm{Ext}_{w(P)}(w(\textbf{b}^{1}))\cap \mathrm{Ext}_{w(P)}(w(\textbf{b}^{l}))$ and $h\in \mathrm{Ext}_{P}(\textbf{b}^{l})$.

(ii) We know that $\{h,h+1\}\subset I\subset[n]\setminus [h-1]$ and $I\in \mathcal{I}(\textbf{b}^{1})\cap \mathcal{I}(\textbf{b}^{l})$ by the proof of Claim 8 (i). Then the conclusion holds for any $i<h$ by Lemma \ref{a and b3} (set $h=i$, $h+1=j$, $I=I$, $\textbf{b}^{1}=\textbf{a}$ and $\textbf{b}^{l}=\textbf{b}$). It follows from Lemmas  \ref{a and b'} and \ref{a and b} for any $i>h+1$.

We now divide Case 2 into four subcases.

\textbf{Subcase 2.1.} Assume that $h+1\notin \mathrm{Int}_{w(P)}(w(\textbf{b}^{1}))$ and $h\notin \mathrm{Ext}_{P}(\textbf{b}^{1})$. By Claims 5 and 7, $h\notin \mathrm{Int}_{P}(\textbf{b}^{l})$ and $h+1\notin \mathrm{Ext}_{w(P)}(w(\textbf{b}^{l}))$.
By (3-i),(3-iv),($3'$-i) and ($3'$-ii), for any $i\in [n]$, $i\in \mathrm{Int}_{P}(\textbf{b}^{1})$ if and only if $i\in \mathrm{Int}_{w(P)}(w(\textbf{b}^{1}))$, $i\in \mathrm{Ext}_{P}(\textbf{b}^{1})$ if and only if $i\in \mathrm{Ext}_{w(P)}(w(\textbf{b}^{1}))$, $i\in \mathrm{Int}_{P}(\textbf{b}^{l})$ if and only if $i\in \mathrm{Int}_{w(P)}(w(\textbf{b}^{l}))$, and $i\in \mathrm{Ext}_{P}(\textbf{b}^{l})$ if and only if $i\in \mathrm{Ext}_{w(P)}(w(\textbf{b}^{l}))$, that is, $A=C$ and $B=D$. Hence, $$A+B=C+D.$$

Assume that Subcase 2.1 is not true. Then by Fact 1, Claims 6 and 8, for any $i\in [n]\setminus\{h,h+1\}$, $$i\in \mathrm{Int}_{w(P)}(w(\textbf{b}^{1}))\stackrel{\text{Fact 1}}{\Longleftrightarrow} i\in \mathrm{Int}_{P}(\textbf{b}^{1})  \stackrel{\text{Claims 6 and 8}}{\Longleftrightarrow} i\in \mathrm{Int}_{P}(\textbf{b}^{l}) \stackrel{\text{Fact 1}}{\Longleftrightarrow} i\in \mathrm{Int}_{w(P)}(w(\textbf{b}^{l})),$$ and $$i\in \mathrm{Ext}_{w(P)}(w(\textbf{b}^{1})) \stackrel{\text{Fact 1}}{\Longleftrightarrow} i\in \mathrm{Ext}_{P}(\textbf{b}^{1}) \stackrel{\text{Claims 6 and 8}}{\Longleftrightarrow} i\in \mathrm{Ext}_{P}(\textbf{b}^{l}) \stackrel{\text{Fact 1}}{\Longleftrightarrow} i\in \mathrm{Ext}_{w(P)}(w(\textbf{b}^{l})).$$
This implies that the set $[n]\setminus\{h,h+1\}$ has same distribution for polynomials $A$, $B$, $C$ and $D$. We may assume that this distribution is $E$. Hence, it is enough to prove that $$\frac{A}{E}+\frac{B}{E}=\frac{C}{E}+\frac{D}{E}.$$
Namely, we only consider distributions of $h+1$ and $h$ for polynomials $A$, $B$, $C$ and $D$.

\textbf{Subcase 2.2.} Assume that $h+1\in \mathrm{Int}_{w(P)}(w(\textbf{b}^{1}))$ and $h\notin \mathrm{Ext}_{P}(\textbf{b}^{1})$. By Claims 6 (i) and 7, we have that $h+1\in \mathrm{Int}_{w(P)}(w(\textbf{b}^{l}))$,  $h\in \mathrm{Int}_{P}(\textbf{b}^{1})\cap \mathrm{Int}_{P}(\textbf{b}^{l})$ and $h+1\notin \mathrm{Ext}_{w(P)}(w(\textbf{b}^{l}))$.
By (3-ii) and (3-v), $h\in \mathrm{Int}_{w(P)}(w(\textbf{b}^{1}))$ and $h+1\in \mathrm{Int}_{P}(\textbf{b}^{l})$. We now consider the following four cases by (3-iii) and (3-vi).

\begin{enumerate}
  \item [(i)] Assume that $h\notin \mathrm{Ext}_{P}(\textbf{b}^{l})$ and $(h+1)\notin \mathrm{Ext}_{P}(\textbf{b}^{1})$. Then $h\notin \mathrm{Ext}_{w(P)}(w(\textbf{b}^{l}))$ and $h+1\notin \mathrm{Ext}_{w(P)}(w(\textbf{b}^{1}))$. Hence $$\frac{A}{E}+\frac{B}{E}=x+x^{2}=\frac{C}{E}+\frac{D}{E}.$$
\item [(ii)] Assume that $h\in \mathrm{Ext}_{P}(\textbf{b}^{l})$ and $h+1\notin \mathrm{Ext}_{P}(\textbf{b}^{1})$. Then
    $$\frac{A}{E}+\frac{B}{E}=x+x(x+y-1)=xy+x^{2}=\frac{C}{E}+\frac{D}{E}.$$
 \item [(iii)] Assume that $h\notin \mathrm{Ext}_{P}(\textbf{b}^{l})$ and $h+1\in \mathrm{Ext}_{P}(\textbf{b}^{1})$. Then
     $$\frac{A}{E}+\frac{B}{E}=xy+x^{2}=x+x(x+y-1)=\frac{C}{E}+\frac{D}{E}.$$
  \item [(iv)] Assume that $h\in \mathrm{Ext}_{P}(\textbf{b}^{l})$ and $h+1\in \mathrm{Ext}_{P}(\textbf{b}^{1})$. Then
      $$\frac{A}{E}+\frac{B}{E}=xy+x(x+y-1)=\frac{C}{E}+\frac{D}{E}.$$
\end{enumerate}
Subcase 2.2 is summarized in Table 1.
\begin{table}[htp]
\begin{center}
\renewcommand\arraystretch{1.5}
\caption{Activities of $h$ and $h+1$ in Subcase 2.2, where $IE$, $I\overline{E}$, $\overline{I}E$ and $\overline{I}\overline{E}$ denote elements of sets
$\mathrm{Int}(\textbf{b}^{1})\cap \mathrm{Ext}(\textbf{b}^{1})$, $\mathrm{Int}(\textbf{b}^{1})\setminus \mathrm{Ext}(\textbf{b}^{1})$, $\mathrm{Ext}(\textbf{b}^{1})\setminus \mathrm{Int}(\textbf{b}^{1})$ and $[n]\setminus (\mathrm{Int}(\textbf{b}^{1})\cup \mathrm{Ext}(\textbf{b}^{1}))$, respectively.}
 \begin{tabular}{|c |c|c|c|c|c|c|c|c|c|c |c|c|c|c|}
  \hline
  &(i)& (ii)& (iii)& (iv)\\
 \hline
 $(h,\textbf{b}^{1},P)$&$I\overline{E}$& $I\overline{E}$& $I\overline{E}$ & $I\overline{E}$\\
 \hline
$(h,w(\textbf{b}^{l}),w(P))$&$\overline{I}\overline{E}$& $\overline{I}E$& $\overline{I}\overline{E}$& $\overline{I}E$\\
 \hline
 $(h,w(\textbf{b}^{1}),w(P))$&$I\overline{E}$& $I\overline{E}$& $I\overline{E}$& $I\overline{E}$\\
 \hline
 $(h,\textbf{b}^{l},P)$&$I\overline{E}$& $IE$& $I\overline{E}$& $IE$\\
 \hline
 $(h+1,\textbf{b}^{1},P)$&$\overline{I}\overline{E}$& $\overline{I}\overline{E}$& $\overline{I}E$ &$\overline{I}E$\\
 \hline
$(h+1,w(\textbf{b}^{l}),w(P))$&$I\overline{E}$& $I\overline{E}$& $I\overline{E}$& $I\overline{E}$\\
 \hline
 $(h+1,w(\textbf{b}^{1}),w(P))$&$I\overline{E}$& $I\overline{E}$& $IE$& $IE$\\
 \hline
 $(h+1,\textbf{b}^{l},P)$&$I\overline{E}$& $I\overline{E}$& $I\overline{E}$&$ I\overline{E}$\\
 \hline
 $\frac{A}{E}$&$x$& $x$& $xy$& $xy$\\
 \hline
 $\frac{B}{E}$&$x^{2}$& $x(x+y-1)$&$x^{2}$& $x(x+y-1)$\\
 \hline
 $\frac{C}{E}$&$x^{2}$& $x^{2}$& $x(x+y-1)$& $x(x+y-1)$\\
 \hline
 $\frac{D}{E}$&$x$& $xy$&$x$& $xy$\\
 \hline
 $\frac{A}{E}+\frac{B}{E}$&$x+x^{2}$& $x+x(x+y-1)$& $xy+x^{2}$& $xy+x(x+y-1)$\\
 \hline
 $\frac{C}{E}+\frac{D}{E}$&$x+x^{2}$& $xy+x^{2}$&$x+x(x+y-1)$& $xy+x(x+y-1)$\\
 \hline
\end{tabular}
\end{center}
\end{table}

\textbf{Subcase 2.3.} Assume that $h+1\notin \mathrm{Int}_{w(P)}(w(\textbf{b}^{1}))$ and $h\in \mathrm{Ext}_{P}(\textbf{b}^{1})$. By Claims 8 (i) and 5, we have that $(h+1)\in \mathrm{Ext}_{w(P)}(w(\textbf{b}^{1}))\cap \mathrm{Ext}_{w(P)}(w(\textbf{b}^{l}))$,  $h\in  \mathrm{Ext}_{P}(\textbf{b}^{l})$ and $h\notin \mathrm{Int}_{P}(\textbf{b}^{l})$. By (3-iii) and (3-vi), $h+1\in \mathrm{Ext}_{P}(\textbf{b}^{1})$ and $h\in  \mathrm{Ext}_{w(P)}(w(\textbf{b}^{l}))$. By (3-ii) and (3-v), it is enough to consider the following four cases.

\begin{enumerate}
  \item [(i)] Assume that $h\notin \mathrm{Int}_{P}(\textbf{b}^{1})$ and $h+1\notin \mathrm{Int}_{P}(\textbf{b}^{l})$. Then
      $$\frac{A}{E}+\frac{B}{E}=y+y^{2}=\frac{C}{E}+\frac{D}{E}.$$
  \item [(ii)] Assume that $h\in \mathrm{Int}_{P}(\textbf{b}^{1})$ and $h+1\notin \mathrm{Int}_{P}(\textbf{b}^{l})$. Then
      $$\frac{A}{E}+\frac{B}{E}=y(x+y-1)+y=y^{2}+xy=\frac{C}{E}+\frac{D}{E}.$$
 \item [(iii)] Assume that $h\notin \mathrm{Int}_{P}(\textbf{b}^{1})$ and $h+1\in \mathrm{Int}_{P}(\textbf{b}^{l})$. Then
     $$\frac{A}{E}+\frac{B}{E}=y^{2}+xy=y(x+y-1)+y=\frac{C}{E}+\frac{D}{E}.$$
  \item [(iv)] Assume that $h\in \mathrm{Int}_{P}(\textbf{b}^{1})$ and $h+1\in \mathrm{Int}_{P}(\textbf{b}^{l})$. Then
      $$\frac{A}{E}+\frac{B}{E}=y(x+y-1)+xy=\frac{C}{E}+\frac{D}{E}.$$
\end{enumerate}
Subcase 2.3 is summarized in Table 2.
\begin{table}[htp]
\begin{center}
\renewcommand\arraystretch{1.5}
\caption{Activities of $h$ and $h+1$ in Subcase 2.3, where $IE$, $I\overline{E}$, $\overline{I}E$ and $\overline{I}\overline{E}$ denote elements of sets
$\mathrm{Int}(\textbf{b}^{1})\cap \mathrm{Ext}(\textbf{b}^{1})$, $\mathrm{Int}(\textbf{b}^{1})\setminus \mathrm{Ext}(\textbf{b}^{1})$, $\mathrm{Ext}(\textbf{b}^{1})\setminus \mathrm{Int}(\textbf{b}^{1})$ and $[n]\setminus (\mathrm{Int}(\textbf{b}^{1})\cup \mathrm{Ext}(\textbf{b}^{1}))$, respectively.}
 \begin{tabular}{|c |c|c|c|c|c|c|c|c|c|c |c|c|c|c|}
  \hline
  &(i)& (ii)& (iii)& (iv)\\
 \hline
 $(h,\textbf{b}^{1},P)$&$\overline{I}E$& $IE$&$ \overline{I}E$ & $IE$\\
 \hline
$(h,w(\textbf{b}^{l}),w(P))$&$\overline{I}E$& $\overline{I}E$& $\overline{I}E$& $\overline{I}E$\\
 \hline
 $(h,\textbf{b}^{1},w(P))$&$\overline{I}\overline{E}$& $I\overline{E}$& $\overline{I}\overline{E}$& $I\overline{E}$\\
 \hline
 $(h,\textbf{b}^{l},P)$&$\overline{I}E$& $\overline{I}E$&$\overline{I}E$& $\overline{I}E$\\
 \hline
 $(h+1,\textbf{b}^{1},P)$&$\overline{I}E$& $\overline{I}E$& $\overline{I}E $& $\overline{I}E$\\
 \hline
$(h+1,w(\textbf{b}^{l}),w(P))$&$\overline{I}E$& $\overline{I}E$& $IE$& $IE$\\
 \hline
 $(h+1,w(\textbf{b}^{1}),w(P))$&$\overline{I}E$& $\overline{I}E$& $\overline{I}E$& $\overline{I}E$\\
 \hline
 $(h+1,\textbf{b}^{l},P)$&$\overline{I}\overline{E}$& $\overline{I}\overline{E}$& $I\overline{E}$& $I\overline{E}$\\
 \hline
 $\frac{A}{E}$&$y^{2}$& $y(x+y-1)$& $y^{2}$& $y(x+y-1)$\\
 \hline
 $\frac{B}{E}$&$y$& $y$&$xy$& $xy$\\
 \hline
 $\frac{C}{E}$&$y$& $xy$& $y$& $xy$\\
 \hline
 $\frac{D}{E}$&$y^{2}$& $y^{2}$&$y(x+y-1)$& $y(x+y-1)$\\
 \hline
 $\frac{A}{E}+\frac{B}{E}$&$y+y^{2}$& $y(x+y-1)+y$& $y^{2}+xy$& $y(x+y-1)+xy$\\
 \hline
$\frac{A}{E}+\frac{B}{E}$&$y+y^{2}$ &$y^{2}+xy$& $y(x+y-1)+y$& $y(x+y-1)+xy$\\
 \hline
\end{tabular}
\end{center}
\end{table}

\textbf{Subcase 2.4.} Assume that $h+1\in \mathrm{Int}_{w(P)}(w(\textbf{b}^{1}))$ and $h\in \mathrm{Ext}_{P}(\textbf{b}^{1})$. Then by Claims 6 (i) and 8 (i), we have that $(h+1)\in \mathrm{Int}_{w(P)}(w(\textbf{b}^{l}))$,  $h\in \mathrm{Int}_{P}(\textbf{b}^{1})\cap \mathrm{Int}_{P}(\textbf{b}^{l})$, $h+1\in \mathrm{Ext}_{w(P)}(w(\textbf{b}^{1}))\cap \mathrm{Ext}_{w(P)}(w(\textbf{b}^{l}))$ and $h\in  \mathrm{Ext}_{P}(\textbf{b}^{l})$. Moreover, by (3-ii), (3-iii), (3-v) and (3-vi), we have that $h\in \mathrm{Int}_{w(P)}(w(\textbf{b}^{1}))$, $h+1\in \mathrm{Int}_{P}(\textbf{b}^{l})$, $h+1\in \mathrm{Ext}_{P}(\textbf{b}^{1})$ and $h\in  \mathrm{Ext}_{w(P)}(w(\textbf{b}^{l}))$. Then
      $$\frac{A}{E}+\frac{B}{E}=x(x+y-1)+y(x+y-1)=\frac{C}{E}+\frac{D}{E}.$$

Subcase 2.4 is summarized in Table 3.

\begin{table}[htp]
\begin{center}
\renewcommand\arraystretch{1.5}
\caption{Activities of $h$ and $h+1$ in Subcase 2.4, where $IE$, $I\overline{E}$, $\overline{I}E$ and $\overline{I}\overline{E}$ denote elements of sets
$\mathrm{Int}(\textbf{b}^{1})\cap \mathrm{Ext}(\textbf{b}^{1})$, $\mathrm{Int}(\textbf{b}^{1})\setminus \mathrm{Ext}(\textbf{b}^{1})$, $\mathrm{Ext}(\textbf{b}^{1})\setminus \mathrm{Int}(\textbf{b}^{1})$ and $[n]\setminus (\mathrm{Int}(\textbf{b}^{1})\cup \mathrm{Ext}(\textbf{b}^{1}))$, respectively.}
 \begin{tabular}{|c |c|c|c|c|c|c|c|c|c|c |c|c|c|c|}
  \hline
  $(h,\textbf{b}^{1},P)$&$(h,w(\textbf{b}^{l}),w(P))$& $(h,w(\textbf{b}^{1}),w(P))$& $(h,\textbf{b}^{l},P)$\\
 \hline
 $IE$&$\overline{I}E$& $I\overline{E}$ & $IE$\\
 \hline
 $(h+1,\textbf{b}^{1},P)$&$(h+1,\textbf{b}^{l},w(P))$& $(h+1,\textbf{b}^{1},w(P))$& $(h+1,\textbf{b}^{l},P)$\\
 \hline
$\overline{I}E$&$IE$& $IE$& $I\overline{E}$\\
 \hline
\end{tabular}
\end{center}
\end{table}

So, $$\sum _{\textbf{b}\in \{\textbf{b}^{1},\textbf{b}^{l}\}}x^{oi_{P}(\textbf{b})}y^{oe_{P}(\textbf{b})}(x+y-1)^{ie_{P}(\textbf{b})}=\sum _{\textbf{b}\in \{w(\textbf{b}^{1}),w(\textbf{b}^{l})\}}x^{oi_{w(P)}(\textbf{b})}y^{oe_{w(P)}(\textbf{b})}(x+y-1)^{ie_{w(P)}(\textbf{b})}.$$

Hence, the equation (*) holds.

In conclusion,
 $$\mathscr{T}_{P}(x,y)=\mathscr{T}_{w(P)}(x,y).$$

\begin{remark}
The proof of Theorem \ref{$S_{n}$-invariance} contains the proof of Theorem 1 in \cite{Guan4} as a special case.
\end{remark}

\begin{example}\label{EX2}
For Example \ref{EX} and $h=3$, we have that
\begin{multline*}
w(P)=\{w(\textbf{a}^{1})=(1,0,2,0,0),w(\textbf{a}^{2})=(1,0,0,0,2),w(\textbf{a}^{3})=(1,1,1,0,0),\\
w(\textbf{a}^{4})=(1,1,0,0,1),w(\textbf{a}^{5})=(1,0,1,1,0),w(\textbf{a}^{6})=(1,0,0,1,1),w(\textbf{a}^{7})=(1,0,1,0,1),\\
w(\textbf{a}^{8})=(0,2,1,0,0),w(\textbf{a}^{9})=(0,2,0,0,1),w(\textbf{a}^{10})=(0,1,2,0,0),w(\textbf{a}^{11})=(0,0,2,1,0),\\
w(\textbf{a}^{12})=(0,1,0,0,2,),w(\textbf{a}^{13})=(0,0,0,1,2),w(\textbf{a}^{14})=(0,1,1,1,0),\\
w(\textbf{a}^{15})=(0,1,0,1,1),w(\textbf{a}^{16})=(0,1,1,0,1),w(\textbf{a}^{17})=(0,0,1,1,1)\}
\end{multline*}
and $P=\{\textbf{a}^{1},\textbf{a}^{5}\}\cup \{\textbf{a}^{2}\}\cup \{\textbf{a}^{3}\}\cup \{\textbf{a}^{4}\}\cup \{\textbf{a}^{6},\textbf{a}^{7}\}\cup \{\textbf{a}^{8}\}\cup \{\textbf{a}^{9}\}\cup \{\textbf{a}^{10},\textbf{a}^{14}\}\cup \{\textbf{a}^{11}\}\cup \{\textbf{a}^{12}\}\cup \{\textbf{a}^{13}\}\cup \{\textbf{a}^{15},\textbf{a}^{16}\}\cup \{\textbf{a}^{17}\}.$ Note that $1\in \mathrm{Int}(\textbf{a})\cap \mathrm{Ext}(\textbf{a})$ for any basis $\textbf{a}\in P$. Then by the table 4, we have that
$\mathscr{T}_{P}(x,y)=(x+y-1)(x^3y+x^2y(x+y-1)+x^4+x^3+x^3y+x^2y+x^2y+x^3y+x^2y+x^2y^2+xy^2(x+y-1)+xy^2(x+y-1)+y^2(x+y-1)^2+x^2y^2+xy^2+xy^2+y^2(x+y-1))=x^5+5x^4y+10x^3y^2+10x^2y^3
+5xy^4+y^5-x^3y-5x^2y^2-6xy^3-2y^4-x^3-2x^2y+xy^2+y^3=\mathscr{T}_{w(P)}(x,y).$

\begin{table}[htp]
\begin{center}
\renewcommand\arraystretch{1.5}
\caption{Activities of $2$, $3$, $4$ and $5$ in Example \ref{EX2}, where $IE$, $I\overline{E}$, $\overline{I}E$ and $\overline{I}\overline{E}$ denote elements of sets
$\mathrm{Int}(\textbf{a})\cap \mathrm{Ext}(\textbf{a})$, $\mathrm{Int}(\textbf{a})\setminus \mathrm{Ext}(\textbf{a})$, $\mathrm{Ext}(\textbf{a})\setminus \mathrm{Int}(\textbf{a})$ and $[n]\setminus (\mathrm{Int}(\textbf{a})\cup \mathrm{Ext}(\textbf{a}))$, respectively.}
 \begin{tabular}{|c |c|c|c|c|c|c|c|c|c|c |c|c|c|c|}
  \hline
  &2& 3/w(3)& 4/w(4)& 5\\
 \hline
 $\textbf{a}^{1}/w(\textbf{a}^{1})$&$I\overline{E}$/$I\overline{E}$& $I\overline{E}$/$I\overline{E}$&$\overline{I}E$/$\overline{I}E$& $I\overline{E}$/$I\overline{E}$\\
 \hline
$\textbf{a}^{2}/w(\textbf{a}^{2})$&$I\overline{E}$/$I\overline{E}$& $I\overline{E}$/$I\overline{E}$& $IE$/$IE$& $\overline{I}E$/$\overline{I}E$\\
 \hline
 $\textbf{a}^{3}/w(\textbf{a}^{3})$&$I\overline{E}$/$I\overline{E}$& $I\overline{E}$/$I\overline{E}$& $I\overline{E}$/$I\overline{E}$& $I\overline{E}$/$I\overline{E}$\\
 \hline
 $\textbf{a}^{4}/w(\textbf{a}^{4})$& $I\overline{E}$/$I\overline{E}$&$I\overline{E}$/$I\overline{E}$& $I\overline{E}$/$I\overline{E}$& $\overline{IE}$/$\overline{IE}$\\
 \hline
 $\textbf{a}^{5}/w(\textbf{a}^{5})$& $I\overline{E}$/$I\overline{E}$& $\overline{I}E$/$\overline{I}E$& $I\overline{E}$/$I\overline{E}$& $I\overline{E}$/$I\overline{E}$\\
 \hline
 $\textbf{a}^{6}/w(\textbf{a}^{6})$&$I\overline{E}$/$I\overline{E}$& $\overline{I}E$/$\overline{I}E$&$I\overline{E}$/$I\overline{E}$ & $\overline{IE}$/$\overline{IE}$\\
 \hline
$\textbf{a}^{7}/w(\textbf{a}^{7})$&$I\overline{E}$/$I\overline{E}$& $I\overline{E}$/$I\overline{E}$& $\overline{I}E$/$\overline{I}E$& $\overline{IE}$/$\overline{IE}$\\
 \hline
 $\textbf{a}^{8}/w(\textbf{a}^{8})$&$\overline{I}E$/$\overline{I}E$& $I\overline{E}$/$I\overline{E}$& $I\overline{E}$/$I\overline{E}$& $I\overline{E}$/$I\overline{E}$\\
 \hline
 $\textbf{a}^{9}/w(\textbf{a}^{9})$& $\overline{I}E$/$\overline{I}E$&$I\overline{E}$/$I\overline{E}$& $I\overline{E}$/$I\overline{E}$& $\overline{IE}$/$\overline{IE}$\\
 \hline
 $\textbf{a}^{10}/w(\textbf{a}^{10})$& $\overline{I}E$/$\overline{I}E$& $I\overline{E}$/$I\overline{E}$& $\overline{I}E$/$\overline{I}E$& $I\overline{E}$/$I\overline{E}$\\
  \hline
 $\textbf{a}^{11}/w(\textbf{a}^{11})$&$IE$/$IE$& $\overline{I}E$/$\overline{I}E$&$ \overline{I}E$ /$\overline{I}E$& $I\overline{E}$/$I\overline{E}$\\
 \hline
$\textbf{a}^{12}/w(\textbf{a}^{12})$&$\overline{I}E$/$\overline{I}E$& $I\overline{E}$/$I\overline{E}$& $IE$/$IE$& $\overline{I}E$/$\overline{I}E$\\
 \hline
 $\textbf{a}^{13}/w(\textbf{a}^{13})$&$IE$/$IE$& $\overline{I}E$/$\overline{I}E$& $IE$/$IE$& $\overline{I}E$/$\overline{I}E$\\
 \hline
 $\textbf{a}^{14}/w(\textbf{a}^{14})$& $\overline{I}E$/$\overline{I}E$&$\overline{I}E$/$\overline{I}E$& $I\overline{E}$/$I\overline{E}$& $I\overline{E}$/$I\overline{E}$\\
 \hline
 $\textbf{a}^{15}/w(\textbf{a}^{15})$& $\overline{I}E$/$\overline{I}E$& $\overline{I}E$/$\overline{I}E$& $I\overline{E}$/$I\overline{E}$& $\overline{IE}$/$\overline{IE}$\\
 \hline
 $\textbf{a}^{16}/w(\textbf{a}^{16})$&$\overline{I}E$/$\overline{I}E$& $I\overline{E}$/$I\overline{E}$&$ \overline{I}E$/$\overline{I}E$ & $\overline{IE}$/$\overline{IE}$\\
 \hline
$\textbf{a}^{17}/w(\textbf{a}^{17})$&$IE$/$IE$ &$\overline{I}E$/$\overline{I}E$& $\overline{I}E$/$\overline{I}E$& $\overline{IE}$/$\overline{IE}$\\
 \hline
\end{tabular}
\end{center}
\end{table}

\end{example}
\section*{Acknowledgements}
\noindent
This work is supported by NSFC (No. 12171402).

\end{document}